  \theoremstyle{plain}
    \newtheorem{thm}{Theorem}[section]
    \newtheorem{prop}[thm]{Proposition}
   \newtheorem{lemma}[thm]{Lemma}
    \newtheorem{subsec}[thm]{}
\theoremstyle{definition}
    \newtheorem{defn}[thm]{Definition}
        \newtheorem{remark}[thm]{Remark}
\theoremstyle{remark}
\title{}
\author{}
\date{}
\begin{document}
\title{Deformations of Loday-type algebras and their morphisms}

\author{Apurba Das}
\address{Department of Mathematics and Statistics,
Indian Institute of Technology, Kanpur 208016, Uttar Pradesh, India.}
\email{apurbadas348@gmail.com}

\subjclass[2010]{16S80, 17A30, 17A99, 16E40}
\keywords{Deformations, Multiplicative operads, Loday-type algebras, Morphisms}


\noindent

\thispagestyle{empty}

\begin{abstract}
We study formal deformations of multiplication in an operad. This closely resembles Gerstenhaber's deformation theory for associative algebras. However, this applies to various algebras of Loday-type and their twisted analogs. We explicitly describe the cohomology of these algebras with coefficients in a representation. Finally, deformation of morphisms between algebras of the same Loday-type is also considered.
\end{abstract}

\maketitle

\section{Introduction}

Algebraic deformation theory first appeared in a pioneer work of M. Gerstenhaber in 1964 \cite{gers-def}. In his paper, he studies formal one-parameter deformations of associative algebras and shows that they are closely related to the Hochschild cohomology of associative algebras.
Since then, formal deformation theory has been studied for various other types of algebras, including Lie algebras, Leibniz algebras, and dialgebras \cite{ nij-rich , bala , maj-mukh }. 
Later on, Gerstenhaber and Schack also developed a deformation theory of associative algebra morphisms \cite{gers-sch} (see also \cite{fre-mar-yau}).

\medskip

Motivated by his deformation theory of associative algebras, Gerstenhaber defined certain operations on the Hochschild cochain complex of associative algebras which induce a rich structure on the cohomology \cite{gers}. This structure is now known as Gerstenhaber algebra. In \cite{gers-voro} the authors shed new light on the Gerstenhaber algebra structure on the Hochschild cohomology which led them to relate it with the Deligne's conjecture. Given a non-symmetric operad $\mathcal{O}$ with a multiplication $\pi \in \mathcal{O}(2)$, they showed that the cochain complex induced from the multiplication $\pi$ inherits a homotopy $G$-algebra structure. Hence the cohomology inherits a Gerstenhaber algebra structure. When one considers the endomorphism operad $\text{End}(A)$ associated to a vector space $A$, a multiplication on $\text{End}(A)$ is precisely an associative algebra structure on $A$ and the corresponding cochain complex is precisely the Hochschild cochain complex. Hence one recovers the result of \cite{gers}. It is important to remark that in the Hochschild cochain complex, there is a degree $-1$ pre-Lie product (which induces the degree $-1$ graded Lie bracket on the associated Gerstenhaber structure on Hochschild cohomology) that plays a pivotal role in the study of finite order deformations of the algebra.

\medskip

In this paper, we mainly concentrate in a class of algebras giving rise to a non-symmetric operad with a multiplication. We call them algebras of ``Loday-type''. Associative algebras belong to this class. In \cite{loday} Loday introduced a notion of (associative) dialgebra which gives rise to a Leibniz algebra in the same way an associative algebra gives rise to a Lie algebra. The Koszul dual operad of dialgebra is given by the operad of dendriform algebra. Dendriform algebras can be thought of as splitting of associative algebras and arise from Rota-Baxter operators of weight $0$ \cite{aguiar}. These two algebras are surprisingly related to some combinatorial objects, namely with planar binary trees. Later on, Loday and Ronco defined other algebras (associative trialgebras, dendriform trialgebras) which are related to planar trees (not necessarily binary) in the same way dialgebras and dendriform algebras are related to planar binary trees \cite{loday-ronco}. Dendriform trialgebras are splitting of associative algebras by three operations. In \cite{aguiar-loday} Aguiar and Loday introduced another class of algebras, called quadri-algebras, which are splitting of dendriform algebras. In the same spirit, Leroux \cite{leroux} defined a splitting of dendriform trialgebras and called them ennea-algebras. These algebras arose from infinitesimal bialgebras and commuting Rota-Baxter operators. It has been shown by Majumdar and Mukherjee \cite{maj-mukh2} that dialgebras are of Loday-type, i.e. dialgebras can be described by an operad with multiplication. The case of associative trialgebras is very similar. In \cite{yau} Yau has defined the same for dendriform algebras and dendriform trialgebras, however, there is some inaccuracy in the constructions of operads. It has been clarified for dendriform algebras in \cite{das4}. We show that dendriform trialgebras, quadri-algebras, and ennea-algebras are also of Loday-type. Note that Lie algebras, Leibniz algebras or algebras whose defining identity/identities has shufflings cannot be described by a non-symmetric operad with multiplication, hence they are not of Loday-type.




\medskip

Our aim in this paper is to study formal deformations of algebras that are of Loday-type. For that, in Section \ref{sec-3}, we first define formal deformations of multiplication in a non-symmetric operad. They are governed by the cohomology induced from the multiplication. The set of equivalence classes of formal deformations of multiplication is shown to be the moduli space of solutions of the Maurer-Cartan equation in a suitable dgLa. We also obtain an explicit deformation formula of a multiplication. Deformations of dialgebras and dendriform algebras as studied in  \cite{maj-mukh, das4} can be seen as deformations of the corresponding multiplications. By definition, deformations of other Loday-type algebras (dendriform trialgebras, quadri-algebras, ennea-algebras) as mentioned in the previous paragraph are given by deformations of the corresponding multiplications. One can also apply this method to a certain twisted analog of Loday-type algebras. It is important to remark that deformations of algebras over quadratic operads have been carried out in \cite{bala}. If $\mathcal{P}$ is a quadratic operad and $A$ is a $\mathcal{P}$-algebra, then deformation of $A$ as a $\mathcal{P}$-algebra can be viewed as a deformation of a certain Maurer-Cartan element in a gLa. This is governed by the operadic cohomology $H^\bullet_\mathcal{P}(A)$ of $A$ with coefficients in itself. However, to obtain the operadic cohomology $H^\bullet_\mathcal{P}(A)$, one needs to know very explicitly the operad $\mathcal{P}$ and its dual operad $\mathcal{P}^!$ (see  \cite{bala}). This is not always an easy task. For instance, the operad of quadri-algebras and its dual operad is hard to work \cite{aguiar-loday}. From this point of view, our cohomology and deformations are more elementary for Loday-type algebras. See Section \ref{sec-4} for the comparison between these two deformations and Section \ref{sec-5} for the comparison between the operadic cohomology and the cohomology induced from the multiplication for Loday-type algebras.

\medskip

Motivated from the fact that Loday-type algebras can be described by an operad with multiplication, in Section \ref{sec-5}, we explicitly define cohomology of Loday-type algebras with coefficients in a representation. As mentioned before, the comparison between this cohomology and operadic cohomology will be given (Remark \ref{comp-two-cohomo}). In the case of a dialgebra, our cohomology coincides with that of Frabetti \cite{frab}. We also show that the second cohomology group can be interpreted as equivalence classes of abelian extensions in the category of algebras of the same Loday-type.

\medskip

Finally, we also study deformations of morphisms between algebras of the same Loday-type. Let $A$ and $B$ be two algebras of same Loday-type. A morphism $f : A \rightarrow B$ between them makes $B$ into a representation of $A$ via $f$. We study deformations of $f$ by deforming the domain and codomain of $f$ as well. In the particular case of a dialgebra morphism, we get the deformation studied in \cite{yau2}.

\medskip

All vector spaces, linear maps, and tensor products are over a field $\mathbb{K}$ of characteristic $0$.

\section{Operads with multiplication}\label{sec-2}
In this section, we recall some basics on non-symmetric operads equipped with a multiplication. See \cite{gers-voro} for more details.

\begin{defn}
A non-symmetric operad (non-$\sum$ operad in short) in the category of vector spaces is a collection $\mathcal{O} = \{ \mathcal{O}(n)|~ n \geq 1 \}$ of vector spaces together with compositions
\begin{align}\label{operad-composition}
\gamma : \mathcal{O}(k) \otimes \mathcal{O}(n_1) \otimes \cdots \otimes \mathcal{O}(n_k) \rightarrow \mathcal{O}(n_1 + \cdots + n_k), ~~
f \otimes g_1 \otimes \cdots \otimes g_k \mapsto \gamma (f ; g_1, \ldots, g_k) 
\end{align}
which are associative in the sense that
\begin{align*}
\gamma \big(&  \gamma (f; g_1 , \ldots, g_k); h_1 , \ldots, h_{n_1 + \cdots + n_k}    \big) \\
&= \gamma \big( f; ~\gamma (g_1 ; h_1, \ldots, h_{n_1}), ~ \gamma ( g_2 ; h_{n_1 + 1}, \ldots, h_{n_1 + n_2}) , \ldots, ~\gamma (g_k ; h_{n_1 + \cdots + n_{k-1}+1}, \ldots, h_{n_1 + \cdots + n_k})  \big)
\end{align*}     
and there is an identity element id$ \in \mathcal{O} (1)$ such that
$\gamma (f ; \underbrace{\text{id}, \ldots, \text{id}}_{k \text{ times}} ) = f = \gamma (\text{id}; f)$, for  $f \in \mathcal{O} (k)$. 
\end{defn}

Alternatively, a non-symmetric operad can also be described by partial compositions
\begin{align*}
\circ_i : \mathcal{O}(m) \otimes \mathcal{O}(n) \rightarrow \mathcal{O}(m+n-1), \quad 1 \leq i \leq m
\end{align*}
satisfying
$$ \begin{cases} (f \circ_i g) \circ_{i+j-1} h = f \circ_i (g \circ_j h),  &\mbox{~ for } 1 \leq i \leq m, ~1 \leq j \leq n, \\ (f \circ_i g) \circ_{j+n-1} h = (f \circ_j h) \circ_i g,   & \mbox{~ for } 1 \leq i < j \leq m, \end{cases}$$
for $f \in \mathcal{O}(m), ~ g \in \mathcal{O}(n), ~ h \in \mathcal{O}(p),$
and an identity element $\text{id} \in \mathcal{O}(1)$ satisfying $ f \circ_i \text{id} = f =\text{id}\circ_1 f,$ for all $f \in \mathcal{O}(m)$ and $1 \leq i \leq m$. The two definitions of non-symmetric operad are related by
\begin{align}
f \circ_i g =~& \gamma (f ;~ \overbrace{\text{id}, \ldots, \text{id} , \underbrace{g}_{i\text{-th place}}, \text{id}, \ldots, \text{id}}^{m\text{-tuple}}),~~~ \text{ for }f \in \mathcal{O}(m), \label{ass-partial}\\
\gamma (f ; g_1, \ldots, g_k) =~&   (\cdots ((f \circ_k g_k) \circ_{k-1} g_{k-1}) \cdots ) \circ_1 g_1 , ~~~ \text{ for }f \in \mathcal{O}(k). \label{partial-ass}
\end{align}

A non-symmetric operad as above may be denoted by $(\mathcal{O}, \gamma, \text{id})$ or $(\mathcal{O}, \circ_i, \text{id})$.
A toy example of a non-symmetric operad is given by the endomorphism operad $\mathcal{O} = \text{End} (A)$ associated to a vector space $A$. For any $n \geq 1$, we define $\mathcal{O}(n) = \text{End} (A^{\otimes n}, A)$. The compositions (\ref{operad-composition}) are substitution of the values of $k$ operations in a $k$-ary operation as inputs. The identity element is given by the identity map on $A$. From now on, by an operad, we shall mean a non-symmetric operad. However, there is a notion of symmetric operad in which there is a right action of $\mathbb{K}[\mathbb{S}_n]$ on $\mathcal{O}(n)$ compatible with the partial compositions \cite{lod-val-book}.

Let $(\mathcal{O}, \gamma, \text{id})$ be an operad. If $f \in \mathcal{O}(n)$, we write $|f| = n-1$. In \cite{getz} Getzler  and Jones has defined the following brace operations
\begin{align}\label{gers-voro-brace}
\{ f \} \{ g_1, \ldots, g_n\} := \sum (-1)^\epsilon ~ \gamma (f ; \text{id}, \ldots, \text{id}, g_1, \text{id}, \ldots, \text{id}, g_n, \text{id}, \ldots, \text{id}),
\end{align} 
where the summation runs over all possible substitutions of $g_1, \ldots, g_n$ into $f$ in the prescribed order and $\epsilon = \sum_{p=1}^n |g_p| i_p$ with $i_p$ being the total number of inputs in front of $g_p$. We denote the circle product $\circ : \mathcal{O}(m) \otimes \mathcal{O}(n) \rightarrow \mathcal{O}(m+n-1)$ by
\begin{align*}
f \circ g := \{ f \} \{ g \} = \sum_{i=1}^m (-1)^{(i-1)|g|}~ f \circ_i g, ~~ \text{ for } f \in \mathcal{O}(m).
\end{align*}
The braces (\ref{gers-voro-brace}) satisfy certain pre-Jacobi identities, which in particular implies that the circle product $\circ$ satisfies the pre-Lie identities
\begin{align}\label{pre-lie-iden}
(f \circ g) \circ h - f \circ (g \circ h) = (-1)^{|g||h|} ((f \circ h) \circ g - f \circ (h \circ g)).
\end{align}
Hence the bracket
$[f, g ] := f \circ g - (-1)^{|f||g|} g \circ f$
defines a degree $-1$ graded Lie bracket on $\oplus_{n \geq 1} \mathcal{O}(n)$.

\begin{defn}
A multiplication on an operad $(\mathcal{O}, \gamma, \text{id})$ is an element $\pi \in \mathcal{O}(2)$ satisfying $\pi \circ \pi = 0$, or, equivalently, $\pi \circ_1 \pi = \pi \circ_2 \pi.$
\end{defn}

Let $(A, \mu)$ be an associative algebra. Then $\mu$ defines a multiplication on the endomorphism operad associated with $A$. In fact, the associativity of $\mu$ is equivalent to $\mu \circ_1 \mu = \mu \circ_2 \mu$ in the endomorphism operad. Thus, one might expect that some of the classical results for associative algebras can be extended to any operads equipped with a multiplication.

If $\pi$ is a multiplication on an operad $\mathcal{O}$, then the product
\begin{align*}
f \cdot g := (-1)^{|f| + 1}~ \{\pi\} \{f, g \}
\end{align*}
and the differential $d_\pi : \mathcal{O}(n) \rightarrow \mathcal{O}(n+1)$, $f \mapsto \pi \circ f - (-1)^{|f|} f \circ \pi$, makes the graded space $\oplus_{n \geq 1} \mathcal{O}(n)$ into a differential graded associative algebra. Thus the product passes to the cohomology $H^\bullet (\mathcal{O}, d_\pi)$. Moreover, it can be shown that the degree $-1$ graded Lie bracket $[~,~]$ also passes to the cohomology. Finally, the induced product and bracket on the cohomology $H^\bullet (\mathcal{O}, d_\pi)$ satisfy the graded Leibniz rule to become a Gerstenhaber algebra \cite{gers-voro}.

The above idea applies to the Hochschild cochain complex of associative algebras, the cochain complex of several other algebras including dialgebras, various other Loday-type algebras, some hom-type algebras and also applicable to singular cochain complex of topological spaces \cite{gers-voro, maj-mukh2, yau, das2, das4}. Therefore, the cohomology of these algebras inherits a Gerstenhaber structure.

\section{Deformations of multiplications}\label{sec-3}
In this section, we define formal deformations of multiplication in an operad. This is similar to the formal deformation theory of associative algebras developed by Gerstenhaber \cite{gers-def}.
The equivalence classes of deformations are described by the moduli spaces of solutions of the Maurer-Cartan equation in a certain dgLa. 

\subsection{Deformation}

Let $(\mathcal{O}, \gamma, \text{id})$ be an operad. Consider the space $\mathcal{O}(n) [[t ]]$ of formal power series in a variable $t$ with coefficients in $\mathcal{O}(n)$. One can linearly extend the circle products (or brace operations) to $\oplus_{n \geq 1} \mathcal{O}(n)[[t]].$

Let $\pi$ be a fixed multiplication on $\mathcal{O}$.
\begin{defn}
A formal $1$-parameter deformation of $\pi$ is given by a formal sum 
\begin{center}
$\pi_t = \pi_0 + \pi_1 t + \pi_2 t^2 + \cdots \in \mathcal{O}(2)[[t]] ~\text{ with } \pi_0 = \pi,$
\end{center}
satisfying $\pi_t \circ \pi_t = 0$. This is equivalent to a system of equations:
\begin{align}\label{deformation-eqn}
\sum_{i+j = n} \pi_i \circ \pi_j = 0,  \text{ for } n \geq 0.
\end{align}
\end{defn}

For $n = 0$, we have $\pi \circ \pi = 0$ which automatically holds from the assumption. For $n = 1$, we have $\pi \circ \pi_1 + \pi_1 \circ \pi = 0$, which implies that $d_\pi (\pi_1) = 0$. Thus, $\pi_1$ defines a $2$-cocycle in $(\mathcal{O}, d_\pi ).$
The $2$-cocycle $\pi_1$ is called the infinitesimal of the deformation. More generally, if $\pi_1 = \cdots = \pi_{n-1} = 0$, then $\pi_n$ is a $2$-cocycle. It is called the $n$-th infinitesimal of the deformation.

\begin{defn}\label{equi-defrm}
Two deformations $\pi_t = \sum_{i \geq 0} \pi_i~ t^i$ and $\pi'_t = \sum_{i \geq 0} \pi'_i~ t^i$
of $\pi$ are said to be equivalent if there exists a formal sum $\phi_t = \phi_0 + \phi_1 t + \phi_2 t^2 + \cdots \in \mathcal{O}(1)[[t]]$ (with $\phi_0 = \text{id} \in \mathcal{O}(1)$) such that
\begin{center}
$\phi_t \circ \pi'_t = \{\pi_t\} \{\phi_t, \phi_t \}.$
\end{center}
\end{defn}

This condition again leads to a system of equations:
\begin{align}
 \sum_{i+j = n} \phi_i \circ \pi'_j = \sum_{i+j+k = n} \{\pi_i\}\{\phi_j, \phi_k\}, ~~ \text{ for } n \geq 0.
\end{align}

For $n = 0$, the relation holds automatically as $\phi_0 = \text{id}$. However, for $n = 1$, it gives
\begin{align*}
\pi'_1 + \phi_1 \circ \pi = \{\pi \} \{ \text{id}, \phi_1 \}  + \{ \pi \} \{ \phi_1, \text{id} \} + \{ \pi_1 \} \{ \text{id}, \text{id} \},
\end{align*}
equivalently,
$\pi'_1 - \pi_1 = \pi \circ \phi_1 - \phi_1 \circ \pi = d_\pi (\phi_1).$
This shows that the infinitesimals corresponding to equivalent deformations are cohomologous and hence they correspond to the same cohomology class in $H^2 (\mathcal{O}, d_\pi).$\\

An infinitesimal deformation is a formal deformation modulo $t^2$. Thus, an infinitesimal deformation of $\pi$ is given by a sum $\pi_t = \pi + \pi_1 t$ satisfying $\pi_t \circ \pi_t \equiv 0$ (mod $t^2$). As before, $\pi_1$ defines a $2$-cocycle in $(\mathcal{O}, d_\pi)$ and equivalent infinitesimal deformations give rise to the same cohomology class in $H^2 (\mathcal{O}, d_\pi).$ Moreover, we have the following characterization of infinitesimal deformations.

\begin{prop}
There is a one-to-one correspondence between the space of equivalence classes of infinitesimal deformations of $\pi$ and the second cohomology $H^2 (\mathcal{O}, d_\pi).$
\end{prop}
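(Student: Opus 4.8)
The plan is to construct an explicit bijection $\Phi$ from the set of equivalence classes of infinitesimal deformations of $\pi$ to $H^2(\mathcal{O}, d_\pi)$, sending the class of $\pi_t = \pi + \pi_1 t$ to the cohomology class $[\pi_1]$. Most of the required groundwork has already been laid in the discussion preceding the statement: the truncated deformation equation $\pi_t \circ \pi_t \equiv 0 \pmod{t^2}$ forces $d_\pi(\pi_1) = 0$, so $\pi_1$ is a genuine $2$-cocycle; and the $n=1$ instance of the equivalence equations shows that equivalent infinitesimal deformations have cohomologous infinitesimals. Hence $\Phi$ is well-defined on equivalence classes, and it remains only to check surjectivity and injectivity.

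For surjectivity, I would start from an arbitrary $2$-cocycle $\pi_1 \in \mathcal{O}(2)$ and set $\pi_t = \pi + \pi_1 t$. Expanding $\pi_t \circ \pi_t$ and discarding terms of order $t^2$ or higher gives $\pi \circ \pi + (\pi \circ \pi_1 + \pi_1 \circ \pi)\, t$. The constant term vanishes because $\pi$ is a multiplication, and the coefficient of $t$ is precisely $d_\pi(\pi_1)$, which vanishes by the cocycle condition. Thus $\pi_t$ is a bona fide infinitesimal deformation with $\Phi([\pi_t]) = [\pi_1]$, proving that $\Phi$ is onto.

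For injectivity, suppose $\pi_t = \pi + \pi_1 t$ and $\pi'_t = \pi + \pi'_1 t$ satisfy $\Phi([\pi_t]) = \Phi([\pi'_t])$, i.e.\ $[\pi_1] = [\pi'_1]$ in $H^2(\mathcal{O}, d_\pi)$. Then $\pi'_1 - \pi_1 = d_\pi(\phi_1)$ for some $\phi_1 \in \mathcal{O}(1)$. I would then set $\phi_t = \text{id} + \phi_1 t$ and verify the equivalence relation $\phi_t \circ \pi'_t = \{\pi_t\}\{\phi_t, \phi_t\}$ modulo $t^2$. Reading off the coefficient of $t$ recovers exactly the identity $\pi'_1 - \pi_1 = \pi \circ \phi_1 - \phi_1 \circ \pi = d_\pi(\phi_1)$ derived earlier, which holds by our choice of $\phi_1$; the constant term holds automatically since $\phi_0 = \text{id}$. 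Hence $\pi_t$ and $\pi'_t$ are equivalent infinitesimal deformations and $\Phi$ is injective.

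The computations here are all light, so there is no real analytic obstacle; the only point demanding care is the bookkeeping, namely confirming that the truncation ``modulo $t^2$'' in the equivalence condition matches exactly the coboundary relation $d_\pi(\phi_1) = \pi'_1 - \pi_1$, so that no higher-order compatibility conditions leak into the infinitesimal picture. Once that truncation is handled cleanly, the map $\Phi$ together with its inverse $[\pi_1] \mapsto [\pi + \pi_1 t]$ establishes the claimed one-to-one correspondence.
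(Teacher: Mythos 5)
Your proposal is correct and follows essentially the same route as the paper: the key computations (a $2$-cocycle $\pi_1$ yields the infinitesimal deformation $\pi + \pi_1 t$, and cohomologous cocycles $\pi_1' - \pi_1 = d_\pi(\phi_1)$ yield equivalent deformations via $\phi_t = \mathrm{id} + \phi_1 t$) are exactly those in the paper's proof, with the remaining directions delegated, as in the paper, to the preceding discussion. The only difference is presentational — you phrase it as injectivity and surjectivity of the forward map $\Phi$, while the paper checks well-definedness of the inverse assignment $[\pi_1] \mapsto [\pi + \pi_1 t]$.
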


\begin{proof}
Any $2$-cocycle $\pi_1 \in \mathcal{O}(2)$ defines an infinitesimal deformation given by $\pi_t = \pi + \pi_1 t$. For any cohomologous $2$-cocycle $\pi_1 + d_\pi (\phi_1) = \pi_1 + \pi \circ \phi_1 - \phi_1 \circ \pi$, for some $\phi_1 \in \mathcal{O}(1)$, the corresponding infinitesimal deformation is $\pi_t' = \pi + (\pi_1 + \pi \circ \phi_1 - \phi_1 \circ \pi) t$. It is easy to see that the sum $\phi_t = \text{id} + \phi_1 t$ satisfies
\begin{align*}
\phi_t \circ \pi_t' = \{ \pi_t \} \{ \phi_t, \phi_t \} ~~~~ (\text{mod } t^2).
\end{align*}
This shows that the infinitesimal deformations $\pi_t$ and $\pi_t'$ are equivalent.
\end{proof}

We now return to formal deformations.
A deformation $\pi_t$ of $\pi$ is said to be trivial if it is equivalent to the deformation $\pi_t' = \pi$.
A multiplication $\pi$ is called rigid if any deformation of $\pi$ is equivalent to a trivial deformation.

\begin{prop}
Let $\pi_t = \sum_{i \geq 0} \pi_i~ t^i$ be a nontrivial deformation of $\pi$. Then $\pi_t$ is equivalent to a deformation $\pi_t' = \pi + \sum_{i \geq p } \pi_i'~ t^i$, where the first nonzero term $\pi_p'$ is a $2$-cocycle but not a coboundary.
\end{prop}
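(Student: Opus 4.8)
The plan is to run Gerstenhaber's classical argument in the operadic setting, repeatedly pushing the leading nonzero coefficient of the deformation to higher and higher order by applying equivalences, until the leading coefficient can no longer be a coboundary. First I would let $p \geq 1$ be the smallest index with $\pi_p \neq 0$; such a $p$ exists because a nontrivial deformation is in particular not equal to the constant series $\pi$. Since $\pi_1 = \cdots = \pi_{p-1} = 0$, the discussion following \eqref{deformation-eqn} already shows that $\pi_p$ is a $2$-cocycle in $(\mathcal{O}, d_\pi)$. If $\pi_p$ is not a coboundary there is nothing to prove: I take $\pi_t' = \pi_t$ and the statement holds with this $p$.

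The core of the argument is the inductive step when $\pi_p$ is a coboundary, say $\pi_p = d_\pi(\chi)$ for some $\chi \in \mathcal{O}(1)$. Here I would introduce the equivalence $\phi_t = \text{id} - \chi\, t^p$ and let $\bar{\pi}_t$ be the deformation determined by $\phi_t \circ \bar{\pi}_t = \{\pi_t\}\{\phi_t, \phi_t\}$. Expanding this identity order by order, and using the two elementary brace identities $\text{id} \circ \chi = \chi$ and $\{\pi\}\{\chi, \text{id}\} + \{\pi\}\{\text{id}, \chi\} = \pi \circ_1 \chi + \pi \circ_2 \chi = \pi \circ \chi$, one finds that $\bar{\pi}_i = \pi_i = 0$ for $i < p$, while the coefficient of $t^p$ reads $\bar{\pi}_p - \chi \circ \pi = \pi_p - \pi \circ \chi$, so that $\bar{\pi}_p = \pi_p - d_\pi(\chi) = 0$. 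Thus $\bar{\pi}_t = \pi + \sum_{i > p} \bar{\pi}_i\, t^i$ has its first nonzero coefficient at some strictly larger order $q > p$, and again by \eqref{deformation-eqn} this coefficient $\bar{\pi}_q$ is automatically a $2$-cocycle, so the same dichotomy (coboundary or not) applies to it.

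I would then iterate this step. If at every stage the leading cocycle turned out to be a coboundary, the procedure would produce a strictly increasing sequence of leading orders $p = p_0 < p_1 < p_2 < \cdots$ together with elementary equivalences $\phi_t^{(k)} = \text{id} - \chi_k\, t^{p_k}$. The key observation is that, because the $p_k$ strictly increase, the composite $\phi_t = \cdots \circ \phi_t^{(1)} \circ \phi_t^{(0)}$ is well defined in the $t$-adic topology: modulo any fixed power $t^N$ only finitely many factors contribute. Hence, were the process never to terminate, this composite would be a genuine formal equivalence carrying $\pi_t$ to the constant deformation $\pi$, i.e. $\pi_t$ would be trivial, contradicting the hypothesis. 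Therefore the process must stop after finitely many steps, at a stage where the leading coefficient is a $2$-cocycle that is not a coboundary; composing the finitely many equivalences used up to that point yields the required $\pi_t' = \pi + \sum_{i \geq p} \pi_i'\, t^i$.

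The main obstacle I anticipate lies not in the algebra but in this termination/convergence step: I must argue carefully that an infinite composition of gauge transformations with strictly increasing leading orders is genuinely well defined as a formal equivalence, and that equivalence of deformations is transitive and stable under such composition, so that the contradiction with nontriviality is legitimate. The order-by-order verification in the inductive step is routine once the two brace identities above have been recorded, and the sign in the choice $\phi_t = \text{id} - \chi\, t^p$ is dictated precisely by the requirement $\bar{\pi}_p = \pi_p - d_\pi(\chi) = 0$.
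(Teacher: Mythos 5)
Your proposal is correct and follows essentially the same route as the paper: identify the first nonzero coefficient, note it is a $2$-cocycle, and if it is a coboundary kill it with the gauge equivalence $\phi_t = \mathrm{id} - \chi\, t^p$ (the paper writes this as $\pi'_t = \phi_t^{-1}\circ\{\pi_t\}\{\phi_t,\phi_t\}$ with $\phi_t = \mathrm{id}+\phi_n t^n$, $\pi_n = -d_\pi(\phi_n)$), then iterate. In fact you go slightly beyond the paper, which ends with ``we apply the same method again,'' by explicitly justifying termination: a non-terminating process would yield a $t$-adically convergent infinite composite of equivalences identifying $\pi_t$ with the trivial deformation, contradicting nontriviality.
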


\begin{proof}
Let $\pi_t = \sum_{i \geq 0} \pi_i~ t^i$ be a deformation such that $\pi_1= \cdots = \pi_{n-1} = 0$ and $\pi_n$ is the first nonzero term. Then it has been shown that $\pi_n$ is a $2$-cocycle. If $\pi_n$ is not a coboundary then we are done. If $\pi_n$ is a coboundary, that is, $\pi_n = - d_{\pi} (\phi_n)$, for some $\phi_n \in  \mathcal{O}(1)$, set
$ \phi_t = \text{id} + \phi_n t^n \in \mathcal{O}(1)[[t]]$.
We define $\pi'_t = \phi_t^{-1} \circ \{ \pi_t \} \{ \phi_t, \phi_t \}$. Then $\pi'_t$ defines a formal deformation of the form
\begin{center}
$\pi'_t = \pi +  \pi'_{n+1} ~t^{n+1} +  \pi'_{n+2}~ t^{n+2} + \cdots.$
\end{center}
Thus, it follows that $\pi'_{n+1}$ is a $2$-cocycle. If this $2$-cocycle is not a coboundary then we are done. Otherwise, we apply the same method again. In this way, we can get a required deformation.
\end{proof}

As a corollary, we obtain the following.

\begin{thm}
If $H^2 (\mathcal{O}, d_\pi) = 0$ then the multiplication $\pi$ is rigid.
\end{thm}

\medskip

Let $\pi$ be a multiplication on an operad $(\mathcal{O}, \gamma, \text{id})$. A finite sum $\pi_t = \sum_{i = 0}^n \pi_i t^i$ with $\pi_0 = \pi$ is said to be a deformation of order $n$ if it satisfies $\pi_t \circ \pi_t = 0~(\mathrm{mod ~} t^{n+1})$. In the following, we assume that $H^2 (\mathcal{O}, d_\pi) \neq 0$ so that one may obtain nontrivial deformations. Here, we consider the problem of extending a deformation of order $n$ to a deformation of order $n + 1$. Suppose there is an element $\pi_{n+1} \in \mathcal{O}(2)$ so that
$\overline{\pi}_t = \pi_t + \pi_{n+1}~ t^{n+1}$
is a deformation of order $n+1$. Then we say that $\pi_t$ extends to a deformation of order $n+1$.

Since we assume that $\pi_t =  \sum_{i = 0}^n  \pi_i t^i$ is a deformation of order $n$, it follows from (\ref{deformation-eqn}) that
\begin{align}\label{deform-rel}
\pi \circ \pi_i + \pi_1 \circ \pi_{i-1} + \cdots + \pi_{i-1} \circ \pi_1 + \pi_i \circ \pi = 0, ~~~ \text{ for } i = 1, 2, \ldots, n,
\end{align}
or, equivalently, $- d_\pi (\pi_i) = \sum_{p+q=i, p, q \geq 1} \pi_p \circ \pi_q$, for $i = 1, 2, \ldots, n.$
For $\overline{\pi}_t = \pi_t +  \pi_{n+1}~ t^{n+1}$ to be a deformation of order $n+1$, one more deformation equation needs to be satisfied
\begin{align}\label{alt-def-eqn}
- d_\pi (\pi_{n+1}) =   \sum_{i+j = n+1, i,j \geq 1} \pi_i \circ \pi_j . 
\end{align}
The right hand side of the above equation is called the obstruction to extend the deformation $\pi_t$ to a deformation of order $n+1$.

\begin{prop}
The obstruction is a $3$-cocycle, that is,
\begin{align*}
d_\pi \big(  \sum_{i+j=n+1, i, j \geq 1} \pi_i \circ \pi_j \big) = 0.
\end{align*}
\end{prop}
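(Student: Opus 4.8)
The plan is to recast the whole computation in terms of the degree $-1$ graded Lie bracket $[-,-]$ and to recognize the obstruction as a single Taylor coefficient of a Maurer--Cartan expression. First I would record two facts that are immediate from the definitions. Since $|\pi|=1$, the sign in $[\pi,f]=\pi\circ f-(-1)^{|f|}f\circ\pi$ matches the one in the differential, so $d_\pi(f)=[\pi,f]$; in particular $d_\pi=[\pi,-]$. Moreover, for the odd element $\pi_t$ one has $[\pi_t,\pi_t]=2\,\pi_t\circ\pi_t$, so the deformation equation $\pi_t\circ\pi_t=0$ is exactly the Maurer--Cartan equation $[\pi_t,\pi_t]=0$, and likewise the order-$n$ relations \eqref{deform-rel} say that the coefficients of $t^0,\dots,t^n$ in $[\pi_t,\pi_t]$ all vanish.

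Next I would rewrite the obstruction through the bracket. For $i,j\geq 1$ the elements $\pi_i,\pi_j$ have degree $1$, so $[\pi_i,\pi_j]=\pi_i\circ\pi_j+\pi_j\circ\pi_i$; summing over ordered pairs gives $\sum_{i+j=n+1,\,i,j\geq 1}[\pi_i,\pi_j]=2\sum_{i+j=n+1,\,i,j\geq 1}\pi_i\circ\pi_j$. Hence the obstruction in \eqref{alt-def-eqn} is precisely half the coefficient of $t^{n+1}$ in $[\pi_t,\pi_t]$, where $\pi_t=\sum_{i=0}^n\pi_i t^i$ is the given order-$n$ deformation.

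The crux is the identity $[\pi_t,[\pi_t,\pi_t]]=0$, which holds identically as a power series because $\pi_t$ is a homogeneous odd element of the graded Lie algebra $\bigoplus_{n\geq 1}\mathcal{O}(n)[[t]]$, and $[x,[x,x]]=0$ for every odd $x$ in a graded Lie algebra; this is the graded Jacobi identity that follows from the pre-Lie identity \eqref{pre-lie-iden}. I would then extract the coefficient of $t^{n+1}$ from $[\pi_t,[\pi_t,\pi_t]]=0$. Writing $B=[\pi_t,\pi_t]=\sum_b B_b t^b$ with $B_b=0$ for $b\leq n$ and $B_{n+1}=2\sum_{i+j=n+1}\pi_i\circ\pi_j$, the coefficient of $t^{n+1}$ in $[\pi_t,B]$ is $\sum_{a+b=n+1,\,0\le a\le n}[\pi_a,B_b]$; every term with $a\geq 1$ forces $b\leq n$ and hence vanishes, leaving only $[\pi_0,B_{n+1}]=2\,[\pi,\sum_{i+j=n+1}\pi_i\circ\pi_j]$. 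Setting this to zero and dividing by $2$ (characteristic $0$) yields $d_\pi\big(\sum_{i+j=n+1,\,i,j\geq 1}\pi_i\circ\pi_j\big)=0$, as required.

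The step I expect to be the main obstacle is the bookkeeping in the last paragraph: verifying the graded Jacobi identity from \eqref{pre-lie-iden} with the correct signs, confirming that $[\pi_t,[\pi_t,\pi_t]]$ vanishes identically rather than merely modulo some power of $t$, and checking that in the coefficient of $t^{n+1}$ only the term pairing $\pi_0=\pi$ against the order-$(n+1)$ piece of $[\pi_t,\pi_t]$ survives. Everything else is routine sign-tracking. I note that an entirely parallel but longer argument could instead proceed directly: apply $d_\pi=[\pi,-]$ to $\sum[\pi_i,\pi_j]$, use the graded Leibniz rule $[\pi,[\pi_i,\pi_j]]=[d_\pi\pi_i,\pi_j]-[\pi_i,d_\pi\pi_j]$, substitute the lower-order relations $d_\pi\pi_i=-\sum_{p+q=i}\pi_p\circ\pi_q$ from \eqref{deform-rel}, and show the resulting double brackets cancel by graded antisymmetry; the Maurer--Cartan route above simply packages this cancellation into $[x,[x,x]]=0$.
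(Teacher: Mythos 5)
Your proof is correct, but it takes a genuinely different route from the paper's. The paper argues directly at the level of circle products: it applies the identity $d_\pi (f \circ g) = f \circ d_\pi(g) - d_\pi(f) \circ g + g \cdot f - f \cdot g$ to each term of the obstruction, substitutes the lower-order relations \eqref{deform-rel}, and is left with a sum of associators $A_{p,q,r} = \pi_p \circ (\pi_q \circ \pi_r) - (\pi_p \circ \pi_q)\circ \pi_r$, which it cancels pairwise (and diagonally) using the pre-Lie identity \eqref{pre-lie-iden}. You instead work in the dgLa $\mathfrak{g} = (\mathcal{O}(\bullet+1), [~,~], d_\pi)$ that the paper itself sets up in Section \ref{sec-3}: you identify $d_\pi = [\pi, -]$, recognize the obstruction as $\tfrac{1}{2}$ times the $t^{n+1}$-coefficient of $[\pi_t, \pi_t]$, and obtain closedness by extracting that coefficient from the identity $[\pi_t,[\pi_t,\pi_t]]=0$, valid for any odd element of a graded Lie algebra over a field of characteristic $0$ (the order-$n$ hypothesis enters exactly where you say it does: it kills all pairings except $[\pi_0, B_{n+1}]$). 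Your key steps check out: $[\pi,f] = \pi\circ f - (-1)^{|f|}f\circ\pi$ does agree with $d_\pi$, the obstruction is indeed half of $B_{n+1}$, and $3[x,[x,x]]=0$ follows from graded Jacobi and antisymmetry. What each approach buys: the paper's computation is elementary and self-contained, needing only the pre-Lie identity and the cup-product formula (not the full Jacobi identity), and it exhibits the cancellation explicitly; yours is shorter and more conceptual, makes the obstruction-theoretic mechanism transparent, ties in directly with the Maurer--Cartan description of deformations in Section \ref{sec-3}, and works verbatim for obstructions in any dgLa rather than being special to operads with multiplication. The only point to flag is that your argument uses divisions by $2$ and $3$, so it needs $\mathrm{char}\,\mathbb{K} \neq 2,3$ --- harmless here since the paper assumes characteristic $0$, but the paper's cancellation argument is marginally more robust.
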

\begin{proof}
For any $f, g \in \mathcal{O}(2)$, it is easy to see that
\begin{align*}
d_\pi (f \circ g) =  f \circ d_\pi (g) - d_\pi (f) \circ g ~+~ g \cdot f -~ f \cdot g.
\end{align*}
(See \cite[Theorem 3]{gers-def} for the case of associative algebra.) Therefore,
\begin{align*}
d_{\pi} \big( \sum_{i+j =n+1, i, j \geq 1} \pi_i \circ \pi_j \big) =~&  \sum_{i+j =n+1, i, j \geq 1} \big( \pi_i \circ d_{\pi} (\pi_j) - d_{\pi} (\pi_i) \circ \pi_j \big) \\
=~& - \sum_{p+q+r = n+1, p, q, r \geq 1} \big(  \pi_p \circ (\pi_q \circ \pi_r) - (\pi_p \circ \pi_q) \circ \pi_r \big)\\ =~& - \sum_{p+q+r = n+1, p, q, r \geq 1} A_{p, q, r}    \qquad \text{(say)}.
\end{align*}
The product $\circ$ is not associative, however, it satisfies the pre-Lie identities (\ref{pre-lie-iden}). This in particular implies that $A_{p, q, r} = 0$ whenever $q = r$. Finally, if $q \neq r$ then $A_{p, q, r} + A_{p, r, q} = 0$ by the same identity (\ref{pre-lie-iden}). Hence we have $\sum_{p+q+r = n+1, p, q, r \geq 1} A_{p, q, r}   = 0$.
\end{proof}

It follows from the above proposition that the obstruction defines a cohomology class in $H^3 (\mathcal{O}, d_\pi)$. If this cohomology class is zero, then the obstruction is given by a coboundary (say $- d_{\pi} (\pi_{n+1})$). In other words, $\overline{\pi_t} = \pi_t +  \pi_{n+1}~ t^{n+1}$ defines a deformation of order $n+1$.

As a summary, we get the following.
\begin{thm}\label{3-zero-extension}
If $H^3 (\mathcal{O}, d_\pi) = 0$, every finite order deformation of $\pi$ can be extended to a deformation of next order.
\end{thm}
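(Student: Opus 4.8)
The plan is to recognize that this theorem is an immediate corollary of the obstruction calculation carried out in the preceding Proposition, so almost all the substantive work is already done. Starting from a deformation $\pi_t = \sum_{i=0}^n \pi_i t^i$ of order $n$, I would first observe that extending it to order $n+1$ by appending a single term means setting $\overline{\pi}_t = \pi_t + \pi_{n+1}\, t^{n+1}$ and checking that $\overline{\pi}_t \circ \overline{\pi}_t \equiv 0 \pmod{t^{n+2}}$. Expanding this product and collecting powers of $t$, the coefficients of $t^i$ for $i \leq n$ coincide with those of $\pi_t \circ \pi_t$ (since the new term contributes only at orders $\geq n+1$), and these already vanish by the hypothesis that $\pi_t$ is a deformation of order $n$, i.e.\ relation (\ref{deform-rel}) holds. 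Hence the only new constraint is the single equation at order $n+1$, namely (\ref{alt-def-eqn}):
\begin{align*}
- d_\pi (\pi_{n+1}) = \sum_{i+j = n+1,\, i,j \geq 1} \pi_i \circ \pi_j .
\end{align*}

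The next step is purely cohomological. Writing $\mathrm{Ob}_{n+1} := \sum_{i+j = n+1,\, i,j \geq 1} \pi_i \circ \pi_j$ for the obstruction, I would invoke the preceding Proposition, which establishes that $\mathrm{Ob}_{n+1}$ is a $3$-cocycle, $d_\pi(\mathrm{Ob}_{n+1}) = 0$. Therefore $\mathrm{Ob}_{n+1}$ determines a class in $H^3(\mathcal{O}, d_\pi)$. Under the hypothesis $H^3(\mathcal{O}, d_\pi) = 0$, this class is forced to vanish, so $\mathrm{Ob}_{n+1}$ is a coboundary: there exists an element $\pi_{n+1} \in \mathcal{O}(2)$ with $\mathrm{Ob}_{n+1} = - d_\pi(\pi_{n+1})$. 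With this choice of $\pi_{n+1}$, equation (\ref{alt-def-eqn}) is satisfied by construction, and combined with the already-verified lower-order relations this shows that $\overline{\pi}_t = \pi_t + \pi_{n+1}\, t^{n+1}$ is a deformation of order $n+1$, completing the extension.

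I do not expect a genuine obstacle here, since the hard part — verifying via the pre-Lie identity (\ref{pre-lie-iden}) that the obstruction is closed — is exactly the content of the Proposition I am allowed to assume. The only point requiring minor care is the bookkeeping in the first paragraph: confirming that appending the top-degree term $\pi_{n+1}t^{n+1}$ leaves the coefficients of $t^i$ for $1 \leq i \leq n$ in $\overline{\pi}_t \circ \overline{\pi}_t$ unchanged, so that no previously satisfied equation is disturbed and solving (\ref{alt-def-eqn}) alone suffices. Once that is noted, the argument is a direct translation of ``closed obstruction class $+$ vanishing $H^3$ $\Rightarrow$ the obstruction is a coboundary $\Rightarrow$ the extension exists,'' which I would present in three or four short lines.
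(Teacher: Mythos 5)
Your proposal is correct and follows exactly the paper's own argument: the only new constraint for the extension is equation (\ref{alt-def-eqn}), the obstruction on its right-hand side is a $3$-cocycle by the preceding Proposition, and the vanishing of $H^3(\mathcal{O}, d_\pi)$ forces it to be a coboundary $-d_\pi(\pi_{n+1})$, which supplies the required term $\pi_{n+1}$. The bookkeeping remark that the added term $\pi_{n+1}t^{n+1}$ does not disturb the already-satisfied lower-order relations is implicit in the paper's setup and is handled correctly in your write-up.
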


\subsection{Deformation space}
In this subsection, we describe the equivalence classes of formal deformations of $\pi$ as the solutions of the Maurer-Cartan equation in a dgLa. See \cite{dou-mar-zima} for the case of  associative algebra deformations.

We start with the following notations.
Let $\mathfrak{g} = \oplus_n \mathfrak{g}^n$ be a dgLa. Consider the new dgLa $L = \mathfrak{g} \otimes (t),$
where $(t) \subset \mathbb{K} [[t]]$ is the ideal generated by $t$. Therefore, degree $n$ elements of $L$ are of the form 
$\gamma = f_{1}t + f_{2}t^{2} + \cdots,$ where~each $f_{i} \in \mathfrak{g}^n$. 
The dgLa structure on $L$ is induced from the dgLa structure on $\mathfrak{g}$. An element $\gamma = f_{1}t+f_{2}t^{2}+ \cdots \in L^1  = \mathfrak{g}^1 \otimes (t)$ is Maurer-Cartan if  it satisfies
$$d \gamma + \frac{1}{2} [\gamma, \gamma ] = 0 \Leftrightarrow df_k + \frac{1}{2} \sum_{i+j=k} [f_i , f_j] = 0 ,~~\text{ for all }~k \geq 1.$$
Denote by MC($\mathfrak{g}$) the set of Maurer-Cartan elements in $L$.
Moreover, the gauge group of $\mathfrak{g}$, defined as
\begin{align*}
G(\mathfrak{g}) = \mathrm{exp} (L^0),
\end{align*}
where $\mathrm{exp} (L^0)$ denotes the group whose underlying space is $L^0 = \mathfrak{g}^0 \otimes (t)$ and the multiplication given by the Baker-Campbell-Hausdorff
formula (induced from the Lie algebra structure on $L^0$). The gauge group $G(\mathfrak{g})$ acts on $L^1 = \mathfrak{g}^1 \otimes (t)$ by
\begin{align*}
x \cdot \gamma  = \text{exp}(x) \cdot \gamma \cdot \text{exp}(-x) , \text{ for } x \in G(\mathfrak{g}), ~ \gamma \in L^1. 
\end{align*}
The gauge group preserves the space MC($\mathfrak{g}$) of Maurer-Cartan elements 
(see \cite{dou-mar-zima} for details). The quotient space
\begin{align*}
\mathfrak{Def(g)} = \text{MC}(\mathfrak{g}) / \text{G}(\mathfrak{g})
\end{align*}
is called the moduli space of solutions of the Maurer-Cartan equation in $L = \mathfrak{g} \otimes (t).$

\medskip

Let $\pi$ be a multiplication in an operad $\mathcal{O}$ and consider the dgLa $\mathfrak{g} = (\mathcal{O} (\bullet +1), [~,~], d_\pi)$. Then
\begin{align*}
\text{MC} (\mathfrak{g}) =~& \{ \gamma = \pi_1 t + \pi_2 t^2 + \cdots |~ \pi_i \in \mathcal{O}(2) \text{ and } d_\pi (\pi_k) + \frac{1}{2} \sum_{i+j=k} [\pi_i , \pi_j] = 0 ,~~\forall~k \geq 1 \}\\
=~& \{  \gamma = \pi_1 t + \pi_2 t^2 + \cdots |~ \pi_i \in \mathcal{O}(2) \text{ and } - d_\pi (\pi_k) = \sum_{i+j=k, i, j \geq 1} \pi_i \circ \pi_j , ~ \forall ~k \geq 1 \}.
\end{align*}
Therefore, $\gamma \in \mathrm{MC}(\mathfrak{g})$ if and only if $\pi_t = \pi + \gamma$ is a formal deformation of $\pi.$ Hence $\mathrm{MC}(\mathfrak{g})$ can be thought of as the set of all formal deformations of $\pi$. Observe that, in this example, the group $G(\mathfrak{g})$ is isomorphic to the group
\begin{align*}
H = \{ \phi_t = \text{id} + \phi_1 t + \phi_2 t^2 + \cdots |~ \phi_i \in \mathcal{O}(1) \}
\end{align*}
via $\text{exp} : G (\mathfrak{g}) \rightarrow H$ and the inverse is given by $\text{log} : H \rightarrow G (\mathfrak{g})$. Note that two formal deformations $\pi_t = \pi + \pi_1t + \pi_2 t^2 + \cdots $ and $\pi_t' = \pi + \pi_1' t + \pi_2' t^2 + \cdots$ are equivalent (i.e define same element in $\mathfrak{Def(g)}$) if and only if
\begin{align*}
\text{exp} (x) \cdot (\pi+ \pi_1t + \pi_2 t^2 + \cdots ) = (\pi + \pi_1' t + \pi_2' t^2 + \cdots) (\text{exp}(x) \otimes \text{exp}(x)), ~~ \text{ for some } x \in G (\mathfrak{g}).
\end{align*}
Hence we conclude that two formal deformations are equivalent in the sense of Definition \ref{equi-defrm} if and only if they lie in the same orbit of the gauge group action. In other words, we have the following.

\begin{prop}
Let $(\mathcal{O}, \pi)$ be an operad with a multiplication. The equivalence classes of formal deformations of $\pi$ is given by the moduli space $\mathfrak{Def(g)}$ of solutions of the Maurer-Cartan equation, where $\mathfrak{g}$ is the dgLa $(\mathcal{O} (\bullet +1), [~,~], d_\pi)$.
\end{prop}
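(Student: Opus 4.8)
The plan is to realize the assignment $\gamma \mapsto \pi_t := \pi + \gamma$ as a bijection between $\mathrm{MC}(\mathfrak{g})$ and the set of all formal deformations of $\pi$, and then to verify that under this bijection the gauge action of $G(\mathfrak{g})$ corresponds exactly to the equivalence relation of Definition \ref{equi-defrm}; quotienting then identifies $\mathfrak{Def(g)}$ with the set of equivalence classes of deformations. The first point is the bookkeeping already recorded above: for $\gamma = \sum_{k \geq 1} \pi_k t^k$ with $\pi_k \in \mathcal{O}(2)$ one has $|\pi_i| = 1$, so $[\pi_i, \pi_j] = \pi_i \circ \pi_j + \pi_j \circ \pi_i$ and therefore $\frac{1}{2} \sum_{i+j=k} [\pi_i, \pi_j] = \sum_{i+j=k,\, i,j \geq 1} \pi_i \circ \pi_j$. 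Hence the Maurer--Cartan equation $d_\pi \gamma + \frac{1}{2}[\gamma, \gamma] = 0$ is, degree by degree in $t$, precisely the system $-d_\pi(\pi_k) = \sum_{i+j=k,\, i,j \geq 1} \pi_i \circ \pi_j$, which is (\ref{deformation-eqn}) once the $\pi_0 = \pi$ terms are absorbed into $d_\pi(\pi_k) = \pi \circ \pi_k + \pi_k \circ \pi$. This establishes the correspondence $\gamma \leftrightarrow \pi_t$.

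For the gauge group, I would note that $\mathfrak{g}^0 = \mathcal{O}(1)$ is an associative algebra under the circle product (the degree is $0$, so no signs intervene and $\circ = \circ_1$ is the operadic composition of unary elements), and that the degree-$0$ bracket $[\,,\,]$ is its commutator. Consequently the Baker--Campbell--Hausdorff product on $L^0 = \mathcal{O}(1) \otimes (t)$ is carried by $\exp$ to the ordinary composition product of $H = \{\phi_t = \mathrm{id} + \phi_1 t + \cdots \mid \phi_i \in \mathcal{O}(1)\}$, so that $\exp : G(\mathfrak{g}) \to H$ is the asserted group isomorphism with inverse $\log$.

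The heart of the argument is matching the two actions. Given $\phi_t = \exp(x) \in H$, solving the relation $\phi_t \circ \pi_t' = \{\pi_t\}\{\phi_t, \phi_t\}$ of Definition \ref{equi-defrm} for $\pi_t'$ gives the transport of structure $\pi_t' = \phi_t^{-1} \circ \{\pi_t\}\{\phi_t, \phi_t\}$, the very operation used earlier to kill coboundary terms. I would identify this with the gauge action by differentiating the family $\Pi(s) := \exp(sx)^{-1} \circ \{\pi + \gamma\}\{\exp(sx), \exp(sx)\}$ in an auxiliary parameter $s$. Writing $\phi = \exp(sx)$ and $\dot\phi = \phi \circ x = x \circ \phi$, the key inputs are the identity $\{\pi+\gamma\}\{\phi \circ x, \phi\} + \{\pi+\gamma\}\{\phi, \phi \circ x\} = \{\pi+\gamma\}\{\phi, \phi\} \circ x$ (inserting the unary $x$ in either slot and summing is the circle product with $x$) together with the operad associativity relation $(g \circ_1 f)\circ_j x = g \circ_1 (f \circ_j x)$, which lets one pull $\phi^{-1}$ through. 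A short computation then yields $\frac{d}{ds}\Pi(s) = \Pi(s) \circ x - x \circ \Pi(s) = [\Pi(s), x]$. Setting $\Gamma(s) = \Pi(s) - \pi$ and using $[\pi, x] = d_\pi x$, this becomes $\frac{d}{ds}\Gamma(s) = d_\pi x - [x, \Gamma(s)]$, which is the infinitesimal gauge vector field $\eta \mapsto [y, \eta] - d_\pi y$ evaluated at $y = -x$. Since both flows solve this ODE with initial datum $\Gamma(0) = \gamma$, the time-one transport of structure equals the gauge action of $\exp(-x)$.

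Because $x \mapsto -x$ is a bijection of $L^0$, the transport-of-structure orbits and the gauge orbits coincide, so the bijection $\gamma \leftrightarrow \pi_t$ descends to a bijection $\mathfrak{Def(g)} = \mathrm{MC}(\mathfrak{g})/G(\mathfrak{g}) \to \{\text{equivalence classes of formal deformations of }\pi\}$, which is the proposition. I expect the main obstacle to lie in this third step: the operadic manipulations in computing $\frac{d}{ds}\Pi(s)$ must be carried out with care, and one must track the direction of the isomorphism (transport by $\phi_t$ versus gauge action by $\exp(-x)$). The signs themselves are harmless since $x$ has operadic degree $0$, but verifying $\frac{d}{ds}\Pi = [\Pi, x]$ and the identity $[\pi, x] = d_\pi x$ carries the genuine content.
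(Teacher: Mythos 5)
Your proposal is correct, and its skeleton matches the paper's: identify $\mathrm{MC}(\mathfrak{g})$ with the set of formal deformations via $\gamma \mapsto \pi + \gamma$ (your degree-by-degree expansion, using $|\pi_i|=1$ to convert $\tfrac{1}{2}\sum[\pi_i,\pi_j]$ into $\sum \pi_i \circ \pi_j$, is exactly the paper's computation), identify $G(\mathfrak{g})$ with the group $H$ of formal unary power series via $\exp$ and $\log$, and then show that gauge orbits are equivalence classes. The genuine divergence is in the third step. The paper writes the gauge action in the conjugation form $x \cdot \gamma = \exp(x) \cdot \gamma \cdot \exp(-x)$, i.e.\ directly as transport of structure, so that lying in the same gauge orbit becomes, by inspection, the condition $\exp(x) \circ \pi_t = \{\pi'_t\}\{\exp(x), \exp(x)\}$ of Definition \ref{equi-defrm} with $\phi_t = \exp(x)$; the identification is essentially definitional, and the compatibility of this conjugation formula with the intrinsic dgLa gauge action is deferred to the cited reference \cite{dou-mar-zima}. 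You instead start from the intrinsic gauge action (the flow of $\eta \mapsto [y,\eta] - d_\pi y$) and prove it coincides with transport of structure by exhibiting both as solutions of the same ODE: your identities are all valid --- inserting a unary $x$ into either slot and summing is indeed the circle product with $x$, operadic associativity does let you pull $\phi^{-1}$ through, all signs vanish since $|x|=0$, and $[\pi, x] = d_\pi x$ holds --- so $\frac{d}{ds}\Pi(s) = [\Pi(s), x]$ and the conclusion that transport by $\exp(x)$ is the gauge action of $\exp(-x)$ are correct, with the substitution $x \mapsto -x$ harmless at the level of orbits. What your route buys is a proof of precisely the point the paper treats as notation, namely that the conjugation/transport formula really is the gauge action on Maurer--Cartan elements; what it costs is length, plus one sentence you should add to justify differentiating in $s$ (unproblematic here, since in each $t$-degree the dependence on $s$ is polynomial). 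With that remark included, your argument is a complete and somewhat more rigorous version of the paper's proof.
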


\subsection{A deformation formula}

In this subsection, we give a deformation formula of a multiplication. When one considers the deformation of an associative algebra (i.e. deformation of multiplication in the endomorphism operad), one recovers the deformation constructed by Gerstenhaber \cite[Lemma 1]{gers3}. We start with the following lemma whose proof is similar to \cite[Lemma 1]{coll-gers-giaq}.

\begin{lemma}\label{deri-rule}
Let $D \in \mathcal{O}(1)$ be such that $d_\pi (D) = 0$ (i.e. $D$ is a $1$-cocycle). Then for any $\overline{D} \in \mathcal{O}(1)$,
\begin{align*}
D^p \circ (D^q \cdot \overline{D}^q) =~& \sum_{j=0}^p {p \choose j}~ D^{q +j} \cdot (D^{p-j} \circ \overline{D}^q), \\
D^p \circ (\overline{D}^q \cdot D^q) =~& \sum_{j=0}^p {p \choose j} ~ (D^j \circ \overline{D}^q) \cdot D^{q+p-j}.
\end{align*}
\end{lemma}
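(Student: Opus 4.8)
The plan is to reduce both identities to a single Leibniz-type rule and then run a binomial induction on $p$. Concretely, I would first show that for any $f, g \in \mathcal{O}(1)$ the $1$-cocycle $D$ acts as a derivation of the product $\cdot$ through the circle product:
\[
D \circ (f \cdot g) = (D \circ f) \cdot g + f \cdot (D \circ g).
\]
Granting this, the two displayed formulas become routine, because $D^{p+1} \circ X = D \circ (D^{p} \circ X)$ for any $X$ (this uses only the operad associativity axiom with $i = j = 1$, which in particular makes $\circ$ associative on the unary part $\mathcal{O}(1)$), together with $D \circ D^{q+j} = D^{q+j+1}$ and $D \circ (D^{p-j} \circ \overline{D}^{q}) = D^{p-j+1} \circ \overline{D}^{q}$.

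The heart of the argument, and the step I expect to be the main obstacle, is establishing this Leibniz rule and tracking its signs. For unary $f, g$ one has $|f| = 0$, so $f \cdot g = (-1)^{|f|+1} \{\pi\}\{f, g\} = - \gamma(\pi; f, g)$, and since $f$ is unary, $D \circ (f \cdot g) = D \circ_1 (f \cdot g) = - \gamma(D; \gamma(\pi; f, g))$. Operad associativity rewrites the last expression as $- \gamma(D \circ \pi; f, g)$. Now I would invoke the cocycle condition: $d_\pi(D) = \pi \circ D - (-1)^{|D|} D \circ \pi = 0$ with $|D| = 0$ gives $D \circ \pi = \pi \circ D = \pi \circ_1 D + \pi \circ_2 D$. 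Substituting this and expanding $\pi \circ_1 D = \gamma(\pi; D, \text{id})$ and $\pi \circ_2 D = \gamma(\pi; \text{id}, D)$ through associativity yields $D \circ (f \cdot g) = - \gamma(\pi; D \circ f, g) - \gamma(\pi; f, D \circ g)$, which is exactly $(D \circ f) \cdot g + f \cdot (D \circ g)$, once one notes that both prefactors $(-1)^{|D \circ f|+1}$ and $(-1)^{|f|+1}$ equal $-1$. The only genuine care needed is in this sign bookkeeping and in confirming that the circle product of unary elements carries no brace signs, which it does not, since the sum in $f \circ g = \sum_{i} (-1)^{(i-1)|g|} f \circ_i g$ collapses to the single term $i = 1$ when $f \in \mathcal{O}(1)$.

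With the Leibniz rule in hand I would induct on $p$. The base case $p = 0$ is immediate, as $D^{0} = \text{id}$ and $\text{id} \circ X = X$. For the inductive step, apply $D \circ (-)$ to the formula for $p$ and use the Leibniz rule on each summand $D^{q+j} \cdot (D^{p-j} \circ \overline{D}^{q})$; this produces two families of terms, which reindex (setting $k = j+1$ in one sum and $k = j$ in the other) and combine via Pascal's identity ${p \choose k-1} + {p \choose k} = {p+1 \choose k}$ to give precisely the formula for $p+1$. The second identity is proved identically: the same Leibniz rule applied to $\overline{D}^{q} \cdot D^{q}$, now differentiating the right factor so as to raise the power of $D^{q}$, yields the companion binomial expansion, so no new ideas are required.
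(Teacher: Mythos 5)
Your proposal is correct and follows essentially the same route as the paper: the paper gives no detailed argument but states that the proof is similar to Lemma 1 of Coll--Gerstenhaber--Giaquinto, which is exactly your strategy of converting the $1$-cocycle condition $d_\pi(D)=0$ into the Leibniz rule $D \circ (f \cdot g) = (D\circ f)\cdot g + f \cdot (D \circ g)$ for unary $f,g$ and then inducting on $p$ via Pascal's identity. Your sign bookkeeping (all brace signs vanish on unary elements, so $f\cdot g = -\gamma(\pi;f,g)$) and the use of the operad axiom to get associativity of $\circ$ against unary left factors are the correct operadic transcription of that classical argument.
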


\begin{thm}\label{univ-formula-thm}
Let $D , \overline{D} \in \mathcal{O}(1)$ be such that $d_\pi (D) = d_\pi (\overline{D}) = 0$ (i.e. $D$ and $\overline{D}$ are both $1$-cocycles). Further, if $D \circ \overline{D} = \overline{D} \circ D$ then
\begin{align}\label{first-univ-formula}
\pi_t = \sum_{n = 0}^{\infty} \frac{t^n}{ n !} ~\{ \pi \} \{ D^n, \overline{D}^n \} = - \sum_{n = 0}^{\infty} \frac{t^n}{ n !}~ D^n \cdot \overline{D}^n
\end{align}
defines a deformation of $\pi$.
\end{thm}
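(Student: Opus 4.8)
The plan is to verify the deformation equation $\pi_t \circ \pi_t = 0$ directly. Since $\pi_t \in \mathcal{O}(2)[[t]]$ has $|\pi_t| = 1$, we have $\pi_t \circ \pi_t = \pi_t \circ_1 \pi_t - \pi_t \circ_2 \pi_t$, so, exactly as in the definition of a multiplication, the deformation equation is equivalent to the associativity-type identity $\pi_t \circ_1 \pi_t = \pi_t \circ_2 \pi_t$. First I would observe that the two displayed expressions for $\pi_t$ agree term by term: as $|D^n| = 0$, the definition of the dot product gives $D^n \cdot \overline{D}^n = -\{\pi\}\{D^n, \overline{D}^n\}$, so $\pi_t = \sum_{n \geq 0} \frac{t^n}{n!} \{\pi\}\{D^n, \overline{D}^n\}$, and I would work with this brace form throughout.

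Next I would expand both $\pi_t \circ_1 \pi_t$ and $\pi_t \circ_2 \pi_t$. By operad associativity, feeding $Q := \{\pi\}\{D^q, \overline{D}^q\}$ into the first (resp.\ second) input of $P := \{\pi\}\{D^p, \overline{D}^p\}$ amounts to replacing $D^p$ by $D^p \circ Q$ (resp.\ $\overline{D}^p$ by $\overline{D}^p \circ Q$) inside the brace. Writing $Q = -D^q \cdot \overline{D}^q$ and applying the first identity of Lemma \ref{deri-rule} to $D^p \circ (D^q \cdot \overline{D}^q)$, and the second identity --- with the roles of $D$ and $\overline{D}$ interchanged, which is legitimate since $\overline{D}$ is also a $1$-cocycle --- to $\overline{D}^p \circ (D^q \cdot \overline{D}^q)$, and then using associativity of the dot product, I expect to reach
\[ \pi_t \circ_1 \pi_t = -\sum_{p,q,j} \frac{t^{p+q}}{p!\, q!} \binom{p}{j}\, D^{q+j} \cdot (D^{p-j}\overline{D}^q) \cdot \overline{D}^p, \]
\[ \pi_t \circ_2 \pi_t = -\sum_{p,q,j} \frac{t^{p+q}}{p!\, q!} \binom{p}{j}\, D^{p} \cdot (D^{q}\overline{D}^j) \cdot \overline{D}^{q+p-j}, \]
the sums running over $p, q \geq 0$ and $0 \leq j \leq p$. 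The commutativity hypothesis $D \circ \overline{D} = \overline{D} \circ D$ enters precisely here: it is what lets me rewrite the operator $\overline{D}^j D^q$ on the middle leg as $D^q \overline{D}^j$, so that both sides are assembled from the same three-fold products $D^{a} \cdot (D^{b}\overline{D}^{c}) \cdot \overline{D}^{d}$.

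Finally I would match the two sums. Recording for each term the four exponents $(a,b,c,d)$ of the operators on the three legs, the first sum yields the data $(q+j,\, p-j,\, q,\, p)$ with coefficient $\frac{1}{q!\, j!\, (p-j)!}$, while the second yields $(p,\, q,\, j,\, q+p-j)$ with the same coefficient. One checks that $(p,q,j) \mapsto (q+j,\, p-j,\, q)$ is a bijection between the two index sets that preserves both the three-fold product and the coefficient; hence the two sums are equal term by term and $\pi_t \circ \pi_t = 0$. I expect the main obstacle to be the sign and index bookkeeping --- in particular keeping the brace signs $(-1)^\epsilon$ straight when passing between $\gamma(\pi; -, -)$ and the dot product, and arranging the triple sums so that the binomial reindexing is visibly a bijection. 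The cocycle hypotheses on $D$ and $\overline{D}$ are used only through Lemma \ref{deri-rule}, and commutativity only in identifying the middle-leg operators.
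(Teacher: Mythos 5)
Your proposal is correct and takes essentially the same approach as the paper's proof: reduce $\pi_t \circ \pi_t = 0$ to $\pi_t \circ_1 \pi_t = \pi_t \circ_2 \pi_t$, expand each side through the two identities of Lemma \ref{deri-rule} (with $D$ and $\overline{D}$ interchanged for the second input), and match terms using commutativity plus a reindexing. Your bijection $(p,q,j) \mapsto (q+j,\, p-j,\, q)$ is precisely the paper's dummy-variable swap $i \leftrightarrow n-j$, $j \leftrightarrow n-i$ written in different coordinates, so there is no gap.
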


\begin{proof}
Here $\pi_i = - \frac{1}{i!} D^i \cdot \overline{D}^i$. To prove that $\pi_t$ defines a deformation of $\pi$, one has to verify relations (\ref{deformation-eqn}). First observe that
\begin{align}
\sum_{i+j = n} \pi_i \circ_1 \pi_j = \sum_{i=0}^n \pi_i \circ_1 \pi_{n-i} =~&  \sum_{i=0}^n \frac{1}{i! (n-i)!} ~ (D^i \circ (D^{n-i} \cdot \overline{D}^{n-i})) \cdot \overline{D}^i \nonumber \\
=~& \sum_{i=0}^n \frac{1}{i! (n-i)!} \sum_{j=0}^i {i \choose j}~ D^{n-i+j} \cdot (D^{i-j} \circ \overline{D}^{n-i}) \cdot \overline{D}^i  \nonumber \\
=~& \sum_{i,j=0, 1, \ldots, n, i \geq j} \frac{1}{(n-i)! j! (i-j)!}~ D^{n-j} \cdot (D^j \circ \overline{D}^{n-i}) \cdot \overline{D}^i. \label{univ-formula}
\end{align}
Similarly,
\begin{align*}
\sum_{i+j = n} \pi_i \circ_2 \pi_j = \sum_{i=0}^n \pi_i \circ_2 \pi_{n-i} =~&  \sum_{i=0}^n \frac{1}{i! (n-i)!}~ D^i  \cdot (\overline{D}^i \circ (D^{n-i} \cdot \overline{D}^{n-i})) \\
=~& \sum_{i=0}^n \frac{1}{i! (n-i)!} \sum_{j=0}^i {i \choose j} ~D^i \cdot (\overline{D}^j \circ D^{n-i}) \cdot \overline{D}^{n-j} \\
=~& \sum_{i,j=0, 1, \ldots, n, i \geq j} \frac{1}{(n-i)! j! (i-j)!} ~D^i \cdot (\overline{D}^j \circ D^{n-i}) \cdot \overline{D}^{n-j}.
\end{align*}
By replacing the dummy variables $i \leftrightarrow n-j$ and $j \leftrightarrow n-i$ and using the fact that $D , \overline{D}$ commute, we get the same expression as in  (\ref{univ-formula}). Thus we obtain $\sum_{i+j = n} \pi_i \circ_1 \pi_j = \sum_{i+j = n} \pi_i \circ_2 \pi_j$ which is equivalent to $\sum_{i+j=n} \pi_i \circ \pi_j = 0$. Hence the proof.
\end{proof}

In terms of the dgLa $\mathfrak{g} = ( \mathcal{O}(\bullet +1), [~,~], d_\pi)$, the above deformation of $\pi$ is given by the Maurer-Cartan element
\begin{align*}
\gamma = - \sum_{n=1}^\infty \frac{t^n}{n !}~ D^n \cdot {\overline{D}}^n \in \mathfrak{g}^1 \otimes (t) = \mathcal{O}(2) \otimes (t).
\end{align*}

\section{Deformations of Loday-type algebras}\label{sec-4}
It is shown in \cite{maj-mukh2, das4} that dialgebra and dendriform algebra structure on a vector space can be seen as multiplication in suitable operads. It is very easy to verify that deformations of these algebras as developed in \cite{maj-mukh, das4} is equivalent to the deformation of the corresponding multiplications. We show that other Loday-type algebras (e.g. dendriform trialgebras, quadri-algebras, ennea-algebras) can also be described by multiplication in certain operads, and by definition, their deformation is given by deformation of the respective multiplications.

Let $\mathcal{P}$ be a non-symmetric quadratic operad with the (quadratic) dual operad $\mathcal{P}^!$. Then for any vector space $A$, the collection of spaces
\begin{align*}
\mathcal{O}(n) := \mathrm{Hom}_\mathbb{K}( (\mathcal{P}^!)^\vee (n) \otimes A^{\otimes n} , A), ~~~ \text{ for } n \geq 1
\end{align*}
forms a non-symmetric operad; the partial compositions are described in \cite{bala}.  (This is not a symmetric operad  even if $\mathcal{P}$ is symmetric.) Moreover, a $\mathcal{P}$-algebra structure on $A$ is given by a Maurer-Cartan element on the graded Lie algebra induced from the operad $\mathcal{O}$, or equivalently, given by a multiplication $\pi_A \in \mathcal{O}(2)$. If the operad $\mathcal{P}$ and its dual operad $\mathcal{P}^!$ are not explicitly known, one cannot construct the operad $\mathcal{O}$. However, for various Loday-type algebras, we construct the operad $\mathcal{O}$ from a different intuition, motivated from the case of dialgebras and dendriform algebras. More precisely, for various Loday-type algebras, there is a sequence of non-empty sets $\mathcal{U} = \{ U_n |~ n \geq 1 \}$ such that the collection of spaces $\{ \mathcal{O} (n) = \text{Hom}_{\mathbb{K}} (\mathbb{K} [U_n] \otimes A^{\otimes n} , A) \}_{n \geq 1}$ forms a non-symmetric operad. We observed that there is a collection of `nice' functions
\begin{align*}
R_0 (m ; \underbrace{1, \ldots,1, \overbrace{n}^{i\text{-th}} ,1, \ldots, 1 }_m) :~& U_{m+n-1} \rightarrow U_m, \\
R_i (m ; \underbrace{1, \ldots,1, \overbrace{n}^{i\text{-th}} ,1, \ldots, 1}_m ) :~& U_{m+n-1} \rightarrow \mathbb{K}[U_n], \quad 1 \leq i \leq m
\end{align*}
such that the partial compositions of the operad $\mathcal{O}$ are given by
\begin{align}\label{loday-par}
&(f \circ_i g)(r; a_1, \ldots, a_{m+n-1}) \\
&=  f (R_0 (m ; 1, \ldots, n , \ldots, 1 )r;~ a_1, \ldots, a_{i-1}, g (R_i (m ; 1, \ldots, n , \ldots, 1 ) r;~ a_i, \ldots, a_{i+n-1}), \ldots, a_{m+n-1}), \nonumber
\end{align}
for $f \in \mathcal{O}(m)$, $g \in \mathcal{O}(n)$, $r \in U_{m+n-1}$ and $a_1, \ldots, a_{m+n-1} \in A$. The identity element $\text{id} \in \mathcal{O}(1)$ is given by $\text{id} (r; a) = a,$ for all $r \in U_1$ and $a \in A$. The collection of functions $\{R_0 , R_i \}$ are called the structure functions for the operad. 
The above relation between the operad $\mathcal{P}$ and the operad $\mathcal{O}$ suggests that $\text{dim } \mathcal{P}^! (n) = \sharp (U_n)$.
Finally, a $\mathcal{P}$-algebra structure on $A$ is equivalent to a multiplication on the operad $\mathcal{O}.$

A deformation of $A$ as a $\mathcal{P}$-algebra in the sense of \cite{bala} is a deformation of the corresponding Maurer-Cartan element in the graded Lie algebra obtained from the operad $\mathcal{O}$. Note that the same Maurer-Cartan element is a multiplication in the operad $\mathcal{O}$. A deformation of $A$ as a Loday-type algebra is, by definition, a deformation of the multiplication.  Hence, these two approaches are eventually the same. In other words, they give rise to the same deformation theory for Loday-type algebras.

Note that the above description of Loday-type algebras suggests writing the deformation as follows.
Let $A$ be a fixed algebra of some Loday-type with the associated multiplication given by $\pi \in \mathcal{O}(2) = \text{Hom}_{\mathbb{K}} (\mathbb{K}[U_2] \otimes A^{\otimes 2}, A).$ Thus, a deformation of $A$ 
is given by a formal sum $\pi_t = \pi + \pi_1 t + \pi_2 t^2 + \cdots \in \mathcal{O}(2)[[t]]$ satisfying $\pi_t \circ \pi_t = 0$. Cohomological interpretations of deformation will be remarked in the next section when we introduce the cohomology of Loday-type algebras (Remark \ref{def-rem}).

\subsection{Dialgebras}
The operad for dialgebras was constructed in \cite{maj-mukh2} and the deformation theory was studied in \cite{maj-mukh}.
\begin{defn} \cite{loday}
A dialgebra is a vector space $A$ together with two bilinear maps   $ \dashv, \vdash : A \otimes A \rightarrow A$ satisfying the following relations
\begin{align*}
& a \dashv (b \dashv c) = (a \dashv b) \dashv c = a \dashv (b \vdash c),\\
& (a \vdash b) \dashv c = a \vdash (b \dashv c),\\
& (a \dashv b) \vdash c = a \vdash (b \vdash c) = (a \vdash b ) \vdash c, ~~~ \text{ for all } a, b , c \in A.
\end{align*}
\end{defn}

Let $Y_n$ be the set of planar binary trees with $(n+1)$ leaves, one root and each vertex is trivalent. The cardinality of $Y_n$ is given by the $n$-th Catalan number. For each $y \in Y_n$, we label the $(n+1)$ leaves by $\{0, 1, \ldots, n \}$ from left to right. We define maps $d_i : Y_n \rightarrow Y_{n-1}$ ($0 \leq i \leq n$) which is obtained by deleting the $i$-th leaf. Finally, the structure maps are given by
\begin{align*}
R_0 (m; 1, \ldots, n, \ldots, 1) :=~& \widehat{d_0} \circ \widehat{d_1} \circ \cdots \circ \widehat{d_{i-1}} \circ d_i \circ \cdots  \circ d_{i+n-2} \circ \widehat{d_{i+n-1}} \circ \cdots  \circ \widehat{d_{m+n-1}} : Y_{m+n-1} \rightarrow Y_m,\\
R_i (m; 1, \ldots, n, \ldots, 1) :=~& d_0 \circ d_1 \circ \cdots \circ {d_{i-2}} \circ \widehat{d_{i-1}} \circ \cdots  \circ \widehat{d_{i+n-1}} \circ d_{i+n} \circ \cdots  \circ d_{m+n-1}  : Y_{m+n-1} \rightarrow \mathbb{K}[Y_n].
\end{align*}
In such a case, the image of $R_i (m; 1, \ldots, n, \ldots, 1)$ lies in $Y_n$.
It is shown in \cite{maj-mukh2} that 
for any vector space $A$, the spaces $\mathcal{O}(n) := \text{Hom}_{\mathbb{K}} (\mathbb{K}[Y_n] \otimes A^{\otimes n}, A)$, for $n \geq 1$, defines an operad  whose partial compositions are given by (\ref{loday-par}). Moreover, if $(A, \dashv, \vdash)$ is a dialgebra, then the element $\pi \in \mathcal{O} (2)$ given by 
$\pi ( \begin{tikzpicture}[scale=0.08] \draw (0,0) -- (0, -2); \draw (2,2) -- (0,0) -- (-2,2);  \draw (1,1) -- (0,2); \end{tikzpicture}; a,b) = a \dashv b$ and
 $\pi (\begin{tikzpicture}[scale=0.08] \draw (0,0) -- (0, -2); \draw (2,2) -- (0,0) -- (-2,2); \draw (-1,1) -- (0,2); \end{tikzpicture} ; a, b) = a \vdash b$ defines a multiplication on $\mathcal{O}$.

A deformation of a dialgebra $(A, \dashv, \vdash)$ in the sense of \cite{maj-mukh} is given by two formal power series
\begin{align*}
 \dashv_t =~ \dashv_0 + \dashv_1 t + \dashv_2 t^2 + \cdots ~~~\text{ and }~~~ \vdash_t =~ \vdash_0 + \vdash_1 t + \vdash_2 t^2 + \cdots
 \end{align*}
(with $\dashv_0 =~ \dashv$ and $\vdash_0 =~ \vdash$) of binary operations on $A$ such that $(A[[t]], \dashv_t , \vdash_t)$ forms a dialgebra over $\mathbb{K}[[t]]$. Then it follows that the formal sum $\pi_t = \pi_0 + \pi_1 t + \pi_2 t^2 + \cdots \in \mathcal{O}(2)[[t]]$ defines a deformation of the multiplication $\pi$, where $\pi_i ( \begin{tikzpicture}[scale=0.08] \draw (0,0) -- (0, -2); \draw (2,2) -- (0,0) -- (-2,2);  \draw (1,1) -- (0,2); \end{tikzpicture} ; a, b ) = a \dashv_i b$ and $\pi_i ( \begin{tikzpicture}[scale=0.08] \draw (0,0) -- (0, -2); \draw (2,2) -- (0,0) -- (-2,2); \draw (-1,1) -- (0,2); \end{tikzpicture} ; a, b) = a \vdash_i b$, for all $i \geq 0$. This follows as the deformation equations of \cite{maj-mukh} are equivalent to the deformation equations (\ref{deformation-eqn}).

\subsection{Associative trialgebras}
This type of algebra is formed by three binary operations $\dashv, \vdash, \perp$ which satisfy $11$ associative-style identities. See \cite{loday-ronco} for the definition. They are related to planar trees (not necessarily binary) exactly in the same way dialgebras are related to planar binary trees.

Let $T_n$ be the set of planar trees with $n+1$ leaves and one root. Then $T_2$ has $3$ elements and $T_3$ has $11$ elements \cite{loday-ronco}. Exactly, in the same way as above, the sets $\mathcal{O}(n) := \mathrm{Hom}_{\mathbb{K}} (\mathbb{K}[T_n] \otimes A^{\otimes n}, A)$, for $n \geq 1$, inherits a structure of an operad. Further, if $(A, \dashv, \vdash, \perp)$ is an associative trialgebra, then the element $\pi \in \mathcal{O}(2)$ given by $\pi (\begin{tikzpicture}[scale=0.08] \draw (0,0) -- (0, -2); \draw (2,2) -- (0,0) -- (-2,2); \draw (1,1) -- (0,2); \end{tikzpicture}; a, b) = a \dashv b$, $\pi (\begin{tikzpicture}[scale=0.08] \draw (0,0) -- (0, -2); \draw (2,2) -- (0,0) -- (-2,2); \draw (-1,1) -- (0,2); \end{tikzpicture} ; a, b) = a \vdash b$ and $\pi (\begin{tikzpicture}[scale=0.08] \draw (0,0) -- (0, -2); \draw (2,2) -- (0,0) -- (-2,2); \draw (0,0) -- (0,2); \end{tikzpicture}; a, b) = a \perp b$ defines a multiplication on the operad $\mathcal{O}$ \cite{yau}. Note that the condition $(\pi \circ \pi) (y; a, b, c) = 0$, for all $y \in T_3$ corresponds to $11$ defining identities of associative trialgebra.

Thus, a deformation of $(A, \dashv, \vdash, \perp)$ is given by a formal sum $\pi_t = \pi + \pi_1 t + \pi_2 t^2 + \cdots \in \mathcal{O}(2)[[t]]$ satisfying $\pi_t \circ \pi_t = 0$. Explicitly, it is given by three formal power series 
\begin{center}
$\dashv_t = ~\dashv_0 + \dashv_1 t +  \dashv_2 t^2 + \cdots$,  ~~~~ $\qquad$~~~~ $\vdash_t = ~\vdash_0 + \vdash_1 t + \vdash_2 t^2 + \cdots$, \\
$\perp_t = ~\perp_0 + \perp_1 t + \perp_2 t^2 + \cdots$
\end{center}
(with $\dashv_0 =~ \dashv$, $\vdash_0 =~ \vdash$ and $\perp_0 =~ \perp$) of binary operations on $A$ such that $(A[[t]],~ \dashv_t,~ \vdash_t,~ \perp_t)$ is a associative trialgebra over $\mathbb{K}[[t]]$. These two interpretations are related by $\pi_i (\begin{tikzpicture}[scale=0.08] \draw (0,0) -- (0, -2); \draw (2,2) -- (0,0) -- (-2,2); \draw (1,1) -- (0,2); \end{tikzpicture}; a, b) = a \dashv_i b$, $\pi_i (\begin{tikzpicture}[scale=0.08] \draw (0,0) -- (0, -2); \draw (2,2) -- (0,0) -- (-2,2); \draw (-1,1) -- (0,2); \end{tikzpicture} ; a, b) = a \vdash b$ and $\pi_i (\begin{tikzpicture}[scale=0.08] \draw (0,0) -- (0, -2); \draw (2,2) -- (0,0) -- (-2,2); \draw (0,0) -- (0,2); \end{tikzpicture} ; a, b) = a \perp_i b$, for all $i \geq 0$.

\subsection{Dendriform algebras}
Dendriform algebras are Koszul dual to dialgebras and they can be thought of as a certain splitting of associative algebras \cite{loday}. The corresponding operad for dendriform algebras and deformation theory was explicitly studied in \cite{das4}.
\begin{defn}
A dendriform algebra is a vector space $A$ together with two bilinear maps $\prec,~ \succ :  A \otimes A \rightarrow A$ satisfying
\begin{align*}
& (a \prec b) \prec c = a \prec (b \prec c + b \succ c),\\
& (a \succ b) \prec c =  a \succ (b \prec c),\\
& (a \prec b + a \succ b) \succ c = a \succ (b \succ c), ~~~ \text{ for all } a, b , c \in A.
\end{align*}
\end{defn}

It follows from the definition that the sum operation $a \star b = a \prec b + a \succ b$ is associative. To define the operad,
let $C_n$ be the set of first $n$ natural numbers. Since we will treat them as symbols, we denote the elements of $C_n$ by $ \{ [1], \ldots, [n] \}.$ For any $m, n \geq 1$ and $1 \leq i \leq  m,$ we define maps $R_0 (m; 1, \ldots, n, \ldots, 1) : C_{m+n-1}  \rightarrow C_m$ and $R_i (m; 1, \ldots, n, \ldots, 1) : C_{m+n-1}  \rightarrow \mathbb{K}[C_n]$ by
\begin{align*} R_0 (m; 1, \ldots, n, \ldots, 1) ([r]) ~=~&
\begin{cases} [r] ~~~ &\text{ if } ~~ r \leq i-1 \\ [i] ~~~ &\text{ if } i \leq r \leq i +n -1 \\
[r -n + 1] ~~~ &\text{ if } i +n \leq r \leq m+n -1, \end{cases}\\
 R_i (m; 1, \ldots,  n, \ldots, 1) ([r]) ~=~&
\begin{cases} [1] + [2] + \cdots + [n] ~~~ &\text{ if } ~~ r \leq i-1 \\ [r - i+ 1] ~~~ &\text{ if } i \leq r \leq i +n -1 \\
[1]+ [2] + \cdots + [n] ~~~ &\text{ if } i +n \leq r \leq m+n -1. \end{cases}
\end{align*}

We may view these functions in the following combinatorial way. Put the first $(m+n-1)$ natural numbers into $m$ boxes in the following way
\begin{align}
\framebox{ 1} \quad \framebox{ 2} \quad \cdots \quad \framebox{ i-1} \quad \framebox{i \quad i+1 \quad  ... \quad i+n-1} \quad \framebox{i+n} \quad \cdots \quad \framebox{m+n-1}~.
\end{align}

\medskip

\noindent With these notations, the map $R_0 (m; 1, \ldots, n, \ldots, 1) ([r])$ gives the number of the box where $r$ appears and $R_i (m; 1, \ldots, n, \ldots, 1) ([r])$ gives the position of $r$ in the $i$-th box (if $r$ lies in the $i$-th box) and $[1]+ \cdots + [n]$ otherwise.

Let $A$ be a vector space. For any $n \geq 1$, we define $\mathcal{O}(n) := \mathrm{Hom}_{\mathbb{K}} (\mathbb{K}[C_n] \otimes A^{\otimes n}, A)$. Then it is shown in \cite{das4} that $\mathcal{O}$ inherits a structure of an operad with structure functions are given by (\ref{loday-par}).

Note that if $(A, \prec, \succ)$ is a dendriform algebra, then the element $\pi \in \mathcal{O}(2) = \text{Hom}_\mathbb{K}(\mathbb{K}[C_2] \otimes A^{\otimes 2}, A)$ given by $\pi ([1]; a, b) =  a \prec b$ and $\pi ([2]; a, b) = a \succ b$, defines a multiplication in the operad. It has been implicitly shown in \cite{das4} that a formal deformation of $(A, \prec, \succ)$ is equivalent to a deformation of the multiplication $\pi$.

\subsection{Dendriform trialgebras}
These algebras are Koszul dual to associative trialgebras and can be thought of as a splitting of an associative algebra by three operations \cite{loday-ronco}.
\begin{defn}
A dendriform trialgebra is a vector space $A$ endowed with three binary operations $\prec (\text{left}), ~ \succ (\text{right})$, and $\cdot ~(\text{middle})$ satisfying the following set of 7 identities
\begin{align*}
& (a \prec b) \prec c =  a \prec (b \prec c + b \succ c + b \cdot c),\\
& (a \succ b) \prec c = a \succ (b \prec c),\\
& (a \prec b + a \succ b + a \cdot b) \succ c = a \succ (b \succ c),\\
& (a \succ b) \cdot c =  a \succ (b \cdot c),\\ 
& (a \prec b) \cdot c = a \cdot (b \succ c),\\
& (a \cdot b) \prec c = a \cdot (b \prec c),\\
& (a \cdot b) \cdot c =  a \cdot (b \cdot c),~~~~ \text{ for all } a, b, c \in A.
\end{align*} 
\end{defn}
It turns out that $(A, \prec + ~\cdot,~ \succ)$ is a dendriform algebra and hence $(A, \prec + \succ +~ \cdot)$ is an associative algebra. 
To define the corresponding operad, let $P_n$ be the set of all non-empty subsets of $\{1, 2, \ldots, n\}$. Thus $P_2 = \{ \{1\}, \{2\}, \{1,2\} \} $ and $P_3 = \{ \{1\}, \{2\}, \{3\}, \{2,3\}, \{1,3\}, \{1,2\}, \{1,2,3\} \}.$

For any $m, n \geq 1$ and $1 \leq i \leq m$, we define the structure functions as
\begin{align*} R_0 (m; 1, \ldots, n, \ldots, 1) (X) ~=~& \{ r |~ \exists ~x \in X \text{ which is in } r\text{-th box} \},\\
 R_i (m; 1, \ldots, n, \ldots, 1) (X) ~=~&
\begin{cases} \sum_{S \in P_n} S ~ &\text{ if } X \cap (i\text{-th box}) = \phi \\ \{k_1 - (i-1), \ldots, k_l - (i-1) \} ~ &\text{ if } X \cap (i\text{-th box}) = \{k_1, \ldots, k_l\}, \end{cases}
\end{align*}
for any $X \in P_{m+n-1}.$
For any vector space $A$, we define $\mathcal{O}(n) = \mathrm{Hom}_\mathbb{K} (\mathbb{K} [P_n] \otimes A^{\otimes n}, A)$, for $n \geq 1$. It can be verified that $\mathcal{O}$ is an operad with the structure functions as defined above and the partial compositions are given by (\ref{loday-par}).

If $(A, \prec, \succ, \cdot)$ is a dendriform trialgebra, we define an element $\pi \in \mathcal{O}(2) = \text{Hom}_\mathbb{K} (\mathbb{K}[P_2] \otimes A^{\otimes 2}, A)$ by
$\pi (\{1\}; a, b) = a \prec b,~
\pi (\{2\}; a, b) = a \succ b,$ and
$\pi (\{1,2\}; a, b) = a \cdot b.$ 
The 7 defining identities of a dendriform trialgebra is equivalent to the fact that $\pi$ defines a multiplication on $\mathcal{O}.$
Note that the element $\pi$ can be understood by the following Hasse diagram of the set of all non-empty subsets of $\{1,2\}$ ordered by inclusion:
\begin{center}
\begin{tikzpicture}[scale=0.50]
\draw (-2,-2) node[anchor=north] { \qquad \{1\} =~ $\prec$} -- (0,0) node[anchor=south] {~ \quad \{1,2\} =~ $\cdot$} -- (2,-2) node[anchor=north] { \quad \qquad \{2\} =~ $\succ$};
\end{tikzpicture}
\end{center}

In view of previous discussions, a deformation of a dendriform trialgebra $(A, \prec, \succ, \cdot)$ is given by three formal power series 
\begin{align*}
\prec_t =~ \prec_0 + \prec_1 t + \prec_2 t^2 + \cdots, \quad \succ_t =~ \succ_0 + \succ_1 t + \succ_2 t^2 + \cdots ~~ \text{ and } \quad \cdot_t =~ \cdot_0 +~ \cdot_1 t +~ \cdot_2 t^2 + \cdots
\end{align*} 
(with $\prec_0 = \prec~, \succ_0 = \succ$ and $\cdot_0 = \cdot$) of binary operations on $A$ such that $(A[[t]], \prec_t,~ \succ_t,~ \cdot_t)$ is a dendriform trialgebra over $\mathbb{K}[[t]]$.

\begin{remark}
Let $(A, \prec, \succ, \cdot)$ be a dendriform trialgebra and $(\prec_t,~ \succ_t,~ \cdot_t)$ be a deformation of it. 
Then $(A[[t]], \prec_t + ~\cdot_t,~ \succ_t)$ is a dendriform algebra over $\mathbb{K}[[t]]$ which provides a deformation of the corresponding dendriform algebra $(A, \prec +~ \cdot,~ \succ)$. Moreover, the pair $(A[[t]], \prec_t + \succ_t +~ \cdot_t)$ is a deformation of the corresponding associative algebra $(A, \prec + \succ + ~\cdot).$
\end{remark}

\medskip

A Rota-Baxter algebra is an associative algebra $(A, \mu)$ together with a $\mathbb{K}$-linear map $R : A \rightarrow A$ which satisfies
\begin{align*}
\mu (R(x), R(y)) = R \big(  \mu (x, R(y)) + \mu (R(x), y) + \lambda \mu (x, y) \big), \text{ for all } x, y \in A.
\end{align*}
Here $\lambda \in \mathbb{K}$ is fixed and is called the weight of the Rota-Baxter algebra.
It follows from \cite{ebra} that a Rota-Baxter algebra $(A, \mu, R)$ of weight $\lambda$ induces a dendriform trialgebra $(A, \prec, \succ, \cdot)$ where
\begin{align*}
x \prec y = \mu (x, R(y)), ~~~\qquad  ~~~ x \succ y = \mu (R(x), y) ~~~ \qquad ~~~ \text{ and } ~~~ \qquad ~~~ x \cdot y = \lambda ~\mu (x, y).
\end{align*}
Therefore, one gets a dendriform algebra $(A, \prec', \succ')$ with
\begin{align*}
x \prec' y = x \prec y + x \cdot y  ~~~ \text{ and } ~~~ x \succ' y =  x \succ y.
\end{align*}

A deformation of a Rota-Baxter algebra $(A, \mu, R)$ of weight $\lambda$ is given by a deformation $\mu_t = \sum_{i \geq 0} \mu_i t^i$ of the associative algebra $(A, \mu)$ and a formal sum $R_t = \sum_{i \geq 0} R_i t^i$ where each $R_i \in \text{End} (A, A)$ with $R_0 = R$ such that $(A[[t]], \mu_t, R_t)$ is a weight $\lambda$ Rota-Baxter algebra over $\mathbb{K}[[t]]$.
Thus, a deformation of a Rota-Baxter algebra induces a dendriform trialgebra $(A[[t]], \prec_{A[[t]]}, \succ_{A[[t]]}, \cdot_{A[[t]]})$ and a dendriform algebra $(A[[t]], \prec'_{A[[t]]}, \succ'_{A[[t]]})$ over $\mathbb{K}[[t]]$. In other words, the triplet $(\prec_t,~ \succ_t,~ \cdot_t)$ is a deformation of the corresponding dendriform trialgebra $(A, \prec, \succ, \cdot)$ where
\begin{align*}
x \prec_t y = \sum_{n} \sum_{i+j=n} \mu_i (x, R_j (y)) t^n,  \quad
x \succ_t y = \sum_{n} \sum_{i+j=n} \mu_i (R_j (x), y) t^n ~~ \text{ and } ~~
x \cdot_t y = \sum_{n} \lambda ~\mu_n (x, y) t^n.
\end{align*}
Thus, the pair $(\prec_t', \succ_t')$ is a deformation of the corresponding dendriform algebra $(A, \prec', \succ')$ where
\begin{align*}
x \prec'_t y :=~& x \prec_t y  + x \cdot_t y = \sum_{n} \sum_{i+j=n} \mu_i (x, R_j (y)) t^n + \sum_n \lambda ~ \mu_n (x, y) t^n, \\
x \succ_t' y :=~& x \succ_t y = \sum_{n} \sum_{i+j=n} \mu_i (R_j (x), y) t^n.
\end{align*}

\subsection{Quadri-algebras}\label{subsec-quadri}
Quadri-algebras are splittings of dendriform algebras that arise naturally on the space of linear endomorphisms of an infinitesimal bialgebra and from two commuting Rota-Baxter operators \cite{aguiar-loday}. These algebras are given by $4$ binary operations  
 $\nwarrow \text{(north-west)},~ \nearrow \text{(north-east)},~ \swarrow \text{(south-west)},~ \searrow \text{(south-east)}$ and satisfying $9$ identities. See \cite{aguiar-loday} for the definition. 
We show that a quadri-algebra structure on a vector space can be seen as multiplication in a certain operad in which the structure functions are the cartesian product of the structure functions of the operad defined for dendriform algebras.

Let $Q_n = \{1, \ldots, n \} \times \{ 1, \ldots, n \}$ be the cartesian product of first $n$ natural numbers with itself. We write the elements of $Q_n$ as $\{ (1,1), \ldots, (1,n), \ldots, (n,1), \ldots, (n,n)\}.$  Thus the cardinality of $Q_n$ is $n^2$. It is useful to think that the $n^2$ elements of $Q_n$ have been allotted in a $n \times n$ square matrix.

\medskip

For any $m, n \geq 1$ and  $1 \leq i \leq m$, we define the structure maps as

$R_0 (m; 1, \ldots,  n, \ldots, 1) (r,s) ~=~$
\begin{align*} 
 \begin{cases} (r,s) ~~~ &\text{ if } ~~ 1 \leq r \leq i-1 \text{ and } 1 \leq s \leq i-1 \\ 
(i,s) &\text{ if } ~~ i \leq r \leq i+n-1 \text{ and } 1 \leq s \leq i-1 \\
(r-n+1, s) &\text{ if } ~~ i+n \leq r \leq m+n-1 \text{ and } 1 \leq s \leq i-1 \\
(r, i) &\text{ if } ~~ 1 \leq r \leq i-1  \text{ and } i \leq s \leq  i+n-1 \\
(i,i) &\text{ if } ~~ i \leq r \leq i+n-1 \text{ and } i \leq s \leq  i+n-1 \\
(r -n+1 , i) &\text{ if } ~~ i+n \leq r \leq m+n-1 \text{ and }  i \leq s \leq  i+n-1 \\
(r, s-n+1) &\text{ if } ~~ 1 \leq r \leq i-1 \text{ and } i+n \leq s \leq m+n-1\\
(i, s - n+1) &\text{ if } ~~ i \leq r \leq i+n-1 \text{ and } i+n \leq s \leq m+n-1 \\
(r - n+1 , s- n+1) &\text{ if } ~~ i+n \leq r \leq m+n-1 \text{ and } i+n \leq s \leq m+n-1, \end{cases}
\end{align*} 
and 

$R_i (m; 1, \ldots,  n, \ldots, 1) (r,s) ~=~$
\begin{align*} 
 \begin{cases}  
\sum _{(u,v) \in Q_n} (u,v)                           &\text{ if } ~~ 1 \leq r \leq i-1 \text{ and } 1 \leq s \leq i-1 \\ 
(r - i +1, 1)+ \cdots + (r-i+1, n) &\text{ if } ~~ i \leq r \leq i+n-1 \text{ and } 1 \leq s \leq i-1 \\
\sum _{(u,v) \in Q_n} (u,v)                          &\text{ if } ~~ i+n \leq r \leq m+n-1 \text{ and } 1 \leq s \leq i-1 \\
(1, s-i+1) + \cdots + (n, s-i+1)   &\text{ if } ~~ 1 \leq r \leq i-1  \text{ and } i \leq s \leq  i+n-1 \\
(r-i+1, s-i+1)                     &\text{ if } ~~ i \leq r \leq i+n-1 \text{ and } i \leq s \leq  i+n-1 \\
(1, s-i+1) + \cdots + (n, s-i+1)   &\text{ if } ~~ i+n \leq r \leq m+n-1 \text{ and }  i \leq s \leq  i+n-1 \\
\sum _{(u,v) \in Q_n} (u,v)                            &\text{ if } ~~ 1 \leq r \leq i-1 \text{ and } i+n \leq s \leq m+n-1\\
(r-i+1, 1) + \cdots + (r-i+1, n )  &\text{ if } ~~ i \leq r \leq i+n-1 \text{ and } i+n \leq s \leq m+n-1 \\
\sum _{(u,v) \in Q_n} (u,v)                           &\text{ if } ~~ i+n \leq r \leq m+n-1 \text{ and } i+n \leq s \leq m+n-1, \end{cases}
\end{align*} 
for $(r,s) \in Q_{m+n-1}$.
One observe that these functions are cartesian products of the respective functions defined for dendriform algebras. It follows that, for any vector space $A$, the spaces  $\mathcal{O}(n) = \text{Hom}_\mathbb{K} (\mathbb{K}[Q_n] \otimes A^{\otimes n}, A)$, for $n \geq 1$, inherits a structure of an operad whose structure functions are defined above and partial compositions are given by (\ref{loday-par}).

If $(A, ~\nwarrow,~ \nearrow,~ \swarrow,~ \searrow)$ is a quadri-algebra, then it can be shown that the element $\pi \in \mathcal{O}(2) = \text{Hom}_\mathbb{K}(\mathbb{K}[Q_2] \otimes A^{\otimes 2}, A)$ defined by
$\pi  ((1,1); a, b) = a \nwarrow b,~
\pi  ((1,2); a, b) = a \nearrow b,~
\pi  ((2,1); a, b) = a \swarrow b$ and
$\pi  ((2,2); a, b) = a \searrow b$
is a multiplication on the operad $\mathcal{O}.$ The correspondence between $\pi$ and $4$ operations of the quadri-algebra $A$ can be understood by the following diagram

\begin{center}
\begin{tikzpicture}[scale=0.4]
\draw  (0,0) node[anchor=north east] {(2,1)} -- (4,0) node[anchor=north west] {(2,2)} -- (4,4) node[anchor=south west] {(1,2)} -- (0,4) node[anchor=south east] {(1,1)} -- (0,0); \draw[thick,->] (2,2) -- (2,5) node[anchor=south] {N} ; \draw[thick,->] (2,2) -- (5,2) node[anchor=west] {E}; \draw[thick,->] (2,2) -- (-1,2) node[anchor=east] {W}; \draw[thick,->] (2,2) -- (2,-1) node[anchor=north] {S}; \draw[thick,->] (2.5, 2.5) -- (3.5, 3.5); \draw[thick,->] (2.5, 1.5) -- (3.5, 0.5) ; \draw[thick,->] (1.5, 2.5) -- (0.5, 3.5);  \draw[thick,->] (1.5, 1.5) -- (0.5, 0.5);
\end{tikzpicture}
\end{center}

It follows from the early discussion of Section \ref{sec-4} that if $\mathcal{Q}$ is the operad for quadri-algebras, then we will have  $\text{dim } \mathcal{Q}^! (n) = n^2$. This is in favour of a conjecture made by Aguiar and Loday \cite{aguiar-loday}. 

A deformation of a quadri-algebra $(A,~ \nwarrow,~ \nearrow ,~ \swarrow,~ \searrow)$ is a deformation of $\pi$ in the operad $\mathcal{O}$. One can also explicitly write the deformation of $A$ by $4$ formal power series $(\nwarrow_t,~ \nearrow_t ,~ \swarrow_t,~ \searrow_t)$ of binary operations on $A$ such that $(A[[t]],~ \nwarrow_t,~ \nearrow_t ,~ \swarrow_t,~ \searrow_t )$ is a quadri-algebra over $\mathbb{K}[[t]]$.

\subsection{Ennea-algebras}\label{subsec-ennea}
Like quadri-algebras are splittings of dendriform algebras, ennea-algebras are splittings of dendriform trialgebras \cite{leroux}. These algebras are given by $9$ binary operations and satisfying $49$ relations. See the above reference for the definition.

We have seen that the structure functions for the operad of quadri-algebras are cartesian products of the structure functions for the operad of dendriform algebras. Similarly, the structure functions for the operad of ennea-algebras are cartesian products of the structure functions for the operad of dendriform trialgebras. More precisely, 
define $E_n = P_n \times P_n$, where $P_n$ is the set of all non-empty subsets of $\{1, \ldots, n \}$. It follows that the cardinality of $E_2$ is $9$ and that of $E_3$ is $49$. We define the structure functions  $R_0 (m; 1, \ldots, 1, n, 1, \ldots, 1) : E_{m+n-1} \rightarrow E_m$ and 
$R_i (m; 1, \ldots, 1, n, 1, \ldots, 1) : E_{m+n-1} \rightarrow \mathbb{K}[E_n]$ to be the cartesian product of the structure functions defined for dendriform trialgebras.
The $9$ elements of $E_2$ correspond to $9$ binary operations and $49$ elements of $E_3$ correspond to $49$ defining relations of an ennea-algebra.
More explicitly, an ennea-algebra structure on a vector space $A$ is equivalent to a multiplication on the operad $\mathcal{O}(n) = \text{Hom}_\mathbb{K}(\mathbb{K}[E_n] \otimes A^{\otimes n}, A)$, for $n \geq 1$, whose structure functions are mentioned above. Deformations of ennea-algebras can be defined in an analogous way.

It follows that if $\mathcal{E}$ is the operad for ennea-algebras, then we will have $\text { dim } \mathcal{E}^! (n) = (2^n -1)^2$.

\subsection{Hom analog of Loday-type algebras}
Recently hom-type algebras have been studied by many authors. In these algebras, the identities defining the structures are twisted by one homomorphism (or two commuting homomorphisms). See \cite{makh-sil, amm-ej-makh, gra-makh-men-pana} for more details. In this subsection, we describe Loday-type algebras twisted by homomorphisms as multiplication in certain twisted operads.

Let $(\mathcal{O}, \gamma, \text{id})$ be an operad with partial compositions $\circ_i$. Let $\alpha, \beta \in \mathcal{O}(1)$ be such that $\alpha \circ \beta = \beta \circ \alpha.$ Consider
\begin{align*}
\mathcal{O}_{\alpha, \beta} (n) = \{ f \in \mathcal{O}(n) |~ \gamma (f ; \alpha, \ldots, \alpha) = \alpha \circ f ~\text{and}~  \gamma (f ; \beta, \ldots, \beta) = \beta \circ f \},~~ \text{ for } n \geq 1.
\end{align*}
Define twisted partial compositions 
$\circ_i' : \mathcal{O}_{\alpha, \beta} (m) \otimes \mathcal{O}_{\alpha, \beta} (n) \rightarrow \mathcal{O}_{\alpha, \beta} (m+n-1)$ by
\begin{align*}
(f \circ_i' g ) = ~ \gamma (f ;~ \alpha^{n-1}, \ldots, \alpha^{n-1}, \underbrace{g}_{i\text{-th place}}, \beta^{n-1}, \ldots, \beta^{n-1}),
\end{align*}
for $f \in \mathcal{O}_{\alpha, \beta}(m),~g \in \mathcal{O}_{\alpha, \beta}(n)$ and $1 \leq i \leq m$.

\begin{prop}
With the above notations $(\mathcal{O}_{\alpha, \beta}, \circ_i', \mathrm{id})$ forms an operad.
\end{prop}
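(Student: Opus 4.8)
The plan is to verify, one at a time, the three requirements in the partial-composition description of a non-symmetric operad: that each $\circ_i'$ is well-defined (lands in $\mathcal{O}_{\alpha,\beta}(m+n-1)$), that the two partial-composition axioms hold for $\circ_i'$, and that $\text{id}$ is a two-sided unit. Throughout I write $\alpha^k$ for the $k$-fold composite in $\mathcal{O}(1)$, with $\alpha^0 = \text{id}$, and I record first a key lemma: for $f \in \mathcal{O}_{\alpha,\beta}(m)$ and every $k \geq 0$ one has $\gamma(f; \alpha^k, \ldots, \alpha^k) = \alpha^k \circ f$, and likewise with $\beta$. This follows by induction on $k$: the case $k=1$ is the defining condition of $\mathcal{O}_{\alpha,\beta}(m)$, while the inductive step combines that condition with the associativity of $\gamma$ to write $\gamma(f; \alpha^{k+1}, \ldots, \alpha^{k+1}) = \gamma(\alpha; \gamma(f; \alpha^{k}, \ldots, \alpha^{k}))$, whereupon the hypothesis yields $\alpha \circ (\alpha^{k} \circ f) = \alpha^{k+1} \circ f$. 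I will also use repeatedly that $\alpha$ and $\beta$ commute, hence so do all their powers, $\alpha^{a} \circ \beta^{b} = \beta^{b} \circ \alpha^{a}$.

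For well-definedness I would compute $\gamma(f \circ_i' g; \alpha, \ldots, \alpha)$ by applying the associativity of $\gamma$: substituting one $\alpha$ into each arity-one slot $\alpha^{n-1}$ or $\beta^{n-1}$ turns it into $\alpha^{n}$ or $\beta^{n-1}\alpha$ respectively, while substituting $n$ copies of $\alpha$ into the slot occupied by $g$ gives $\gamma(g; \alpha, \ldots, \alpha) = \alpha \circ g$ by the condition on $g$. On the other side, $\alpha \circ (f \circ_i' g) = \gamma(\alpha; \gamma(f; \ldots)) = \gamma(\gamma(\alpha; f); \ldots) = \gamma(\gamma(f; \alpha, \ldots, \alpha); \ldots)$ by associativity and the condition on $f$; distributing the outer $\alpha$ into every slot produces the identical decorated expression once we use $\beta^{n-1}\alpha = \alpha\beta^{n-1}$. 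The computation with $\beta$ is identical, so $f \circ_i' g \in \mathcal{O}_{\alpha,\beta}(m+n-1)$.

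The heart of the proof is the two partial-composition axioms for $\circ_i'$. For both, the strategy is to expand each side into a single iterated application of $\gamma$, using the definition of $\circ'$ and the associativity of $\gamma$, and then to use the lemma to collapse nested powers $\gamma(\alpha^{p}; \alpha^{q}) = \alpha^{p+q}$ and to convert a slot $\gamma(g; \alpha, \ldots, \alpha)$ into $\alpha \circ g$ (and similarly with $\beta$), so that each side becomes $\gamma(f; \ldots)$ with a definite element in every slot. For the sequential axiom $(f \circ_i' g) \circ_{i+j-1}' h = f \circ_i' (g \circ_j' h)$ this comparison goes through with every $\alpha$ remaining an $\alpha$ and every $\beta$ remaining a $\beta$; in particular the $i$-th slot of $f$ collapses on the left-hand side to exactly $g \circ_j' h$, and no commutativity is needed. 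The one place the hypothesis $\alpha \circ \beta = \beta \circ \alpha$ is genuinely used is the parallel axiom $(f \circ_i' g) \circ_{j+n-1}' h = (f \circ_j' h) \circ_i' g$ for $i < j$: there the slots of $f$ lying strictly between positions $i$ and $j$ pick up a factor $\alpha^{p-1}$ and a factor $\beta^{n-1}$, but in the order $\beta^{n-1}\alpha^{p-1}$ on the left-hand side and $\alpha^{p-1}\beta^{n-1}$ on the right-hand side, while the slots before $i$ (each $\alpha^{n+p-2}$), the slot $i$ (each $\alpha^{p-1}\circ g$), the slot $j$ (each $\beta^{n-1}\circ h$), and the slots after $j$ (each $\beta^{n+p-2}$) already agree. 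Commutativity of these powers reconciles the two, and the careful tracking of the exponents in every slot is the main, essentially bookkeeping, obstacle.

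Finally, the unit axioms are immediate. Taking $g = \text{id} \in \mathcal{O}(1)$ forces $n = 1$, so every decorating factor is $\alpha^{0} = \beta^{0} = \text{id}$ and $f \circ_i' \text{id} = \gamma(f; \text{id}, \ldots, \text{id}) = f$; taking $f = \text{id}$ forces $m = 1$ with no decorating slots, whence $\text{id} \circ_1' g = \gamma(\text{id}; g) = g$. Moreover $\text{id} \in \mathcal{O}_{\alpha,\beta}(1)$, since $\gamma(\text{id}; \alpha) = \alpha = \alpha \circ \text{id}$ and likewise for $\beta$, so the unit indeed lies in the twisted operad.
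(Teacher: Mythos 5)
Your proof is correct. It is worth noting what the paper itself does here: it gives no argument at all, simply remarking that the proof ``is simple and can be found in \cite{das2, das3} when $\mathcal{O}$ is the endomorphism operad $\mathrm{End}(A)$,'' i.e.\ it defers to the BiHom-associative case and leaves the general operadic statement to the reader. Your verification therefore supplies exactly the details the paper omits, and it does so at the right level of generality: everything is deduced from the associativity of $\gamma$ rather than from pointwise formulas in $\mathrm{End}(A)$. The structural content of your argument is also accurate and worth keeping explicit: the power lemma $\gamma(f;\alpha^k,\ldots,\alpha^k)=\alpha^k\circ f$ (proved by induction from the defining condition plus associativity) is what lets the $i$-th slot on one side of the parallel axiom, $\gamma(g;\alpha^{p-1},\ldots,\alpha^{p-1})$, be identified with $\alpha^{p-1}\circ g$ on the other side, and similarly $\gamma(h;\beta^{n-1},\ldots,\beta^{n-1})=\beta^{n-1}\circ h$; the hypothesis $\alpha\circ\beta=\beta\circ\alpha$ enters only in well-definedness (reconciling $\beta^{n-1}\circ\alpha$ with $\alpha\circ\beta^{n-1}$) and in the slots strictly between $i$ and $j$ in the parallel axiom, while the sequential axiom and the unit axioms need neither the commutativity nor the lemma. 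Your slot-by-slot bookkeeping for the parallel axiom (before $i$: $\alpha^{n+p-2}$; at $i$: $\alpha^{p-1}\circ g$; between: $\beta^{n-1}\circ\alpha^{p-1}$ versus $\alpha^{p-1}\circ\beta^{n-1}$; at $j$: $\beta^{n-1}\circ h$; after $j$: $\beta^{n+p-2}$) checks out exactly.
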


The proof of the above proposition is simple and can be found in \cite{das2, das3} when $\mathcal{O}$ is the endomorphism operad $\text{End} (A)$ associated to a vector space $A$. The operad $(\mathcal{O}_{\alpha, \beta}, \circ_i', \text{id})$ is called the twisted variation of $\mathcal{O}$ twisted by $\alpha$ and $\beta$. One observes that when $\alpha = \beta = \text{id} \in \mathcal{O}(1)$, the twisted variation $\mathcal{O}_{\text{id}, \text{id}}$ is same as the operad $\mathcal{O}.$

 Thus it follows from the previous proposition that one can construct a twisted version of various operads as defined in previous subsections. Multiplications of these twisted operads are called twisted algebras. For example, a twisted associative algebra (also called BiHom-associative algebra) structure on $A$ is given by a multiplication on the twisted operad $\text{End}_{\alpha, \beta}(A)$ \cite{das3}. When $\alpha = \beta$, one get Hom-associative algebras.
 
One can construct twisted algebra structures as follows. Suppose $\pi \in \mathcal{O}(2) = \text{Hom}_\mathbb{K}(\mathbb{K}[U_2] \otimes A^{\otimes 2}, A)$ defines a fixed Loday-type algebra structure on $A$, and $\alpha, \beta$ be two commuting algebra morphisms. That is, $\alpha \circ \pi = \{ \pi \} \{ \alpha , \alpha \}$ and $\beta \circ \pi = \{ \pi \} \{ \beta , \beta \}$. Then $\{ \pi \} \{ \alpha, \beta \} \in \mathcal{O}_{\alpha, \beta} (2)$ given by
 \begin{align*}
\{ \pi \} \{ \alpha, \beta \} (r; a, b) := \pi (r; \alpha (a), \beta (b)),
\end{align*}
for $ r \in U_2,~ a, b \in A$, defines a twisted Loday-type algebra structure on $A$. This construction is called the `Yau twist'.

In the same analogy, a deformation of a twisted Loday-type algebra $A$ is by definition a deformation of the corresponding multiplication in the twisted variation operad. For (Bi)Hom-associative algebras, one recovers the deformation described in \cite{amm-ej-makh, das3}.

\section{Cohomology of Loday-type algebras}\label{sec-5}
In this section, we study representations and cohomology of Loday-type algebras from the perspectives of multiplicative operads. We show that this cohomology is isomorphic to the operadic cohomology for Loday-type algebras.

\subsection{Representations}
Let $A$ be a fixed Loday-type algebra. Assume that the Loday-type algebra structure on $A$ can be given by a multiplication $\pi$ on the operad $\mathcal{O}$ in which the structure functions are given by $\{R_0, R_i \}$ on the sets $\{ U_n |~ n \geq 1 \}.$ See the beginning of Section \ref{sec-4}.
\begin{defn}
A representation of $A$ is given by a vector space $M$ together with $\mathbb{K}$-multilinear maps
\begin{align*}
\theta_1 : \mathbb{K}[U_2] \otimes (A \otimes M) \rightarrow M, ~~\qquad~~ \theta_2 : \mathbb{K}[U_2] \otimes (M \otimes A) \rightarrow M
\end{align*}
satisfying
\begin{align*}
\theta_1 \big(  R_0 (2; 1, 2) y; ~ a,~ \theta_1 ( R_2 (2; 1, 2)y; b, m)  \big) =~& \theta_1 (R_0 (2; 2, 1)y ; ~\pi (R_1 (2; 2, 1)y; a, b),~ m), \\
\theta_2 \big(  R_0 (2; 1, 2) y; ~ m,~ \pi ( R_2 (2; 1, 2)y; a, b)  \big) =~& \theta_2 (R_0 (2; 2, 1)y ; ~\theta_2 (R_1 (2; 2, 1)y; m, a),~ b), \\
\theta_1 \big(  R_0 (2; 1, 2) y; ~ a,~ \theta_2 ( R_2 (2; 1, 2)y; m, b)  \big) =~& \theta_2 (R_0 (2; 2, 1)y ; ~\theta_1 (R_1 (2; 2, 1)y; a, m),~ b),
\end{align*}
for all $y \in U_3$ and $a, b \in A$, $m \in M$.
\end{defn}

There are $3~ \sharp (U_3)$ relations to define a representation.
Moreover, it follows that $A$ is a representation of itself with $\theta_1 = \theta_2 = \pi$. Any vector space $M$ can be considered as  a representation of $A$ with $\theta_1 = \theta_2 = 0.$

Let $A$ be a fixed Loday-type algebra. An ideal of $A$ is a subspace $I \subset A$ satisfying $\pi (r ; A, I) \subset I$ and $\pi (r; I, A) \subset I$, for all $r \in U_2$. Any ideal of $A$ is a representation with $\theta_1 = \theta_2 = \pi$.

\begin{prop} (Semi-direct product) Let $A$ be a fixed Loday-type algebra (given by the multiplication $\pi$) and $M$ be a representation of $A$. Then the direct sum $A \oplus M$ inherits a Loday algebra structure of the same type. The multiplication is given by
\begin{align*}
\pi_{A \oplus M} (r ; (a, m), (b, n)) = (\pi (r; a, b),~ \theta_1 (r; a, n) + \theta_2 (r; m, b)),
\end{align*}
for $r \in U_2$ and $(a, m), (b, n) \in A \oplus M$.
\end{prop}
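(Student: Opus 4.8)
The plan is to read ``inherits a Loday-type algebra structure of the same type'' as the assertion that $\pi_{A\oplus M}$ is a \emph{multiplication} on the operad $\mathcal{O}_{A\oplus M}(n)=\mathrm{Hom}_{\mathbb{K}}(\mathbb{K}[U_n]\otimes(A\oplus M)^{\otimes n},A\oplus M)$ built from the \emph{same} structure functions $\{R_0,R_i\}$ via (\ref{loday-par}). Since the operad structure in (\ref{loday-par}) depends only on the structure functions and the underlying vector space, and not on any algebra structure, $\mathcal{O}_{A\oplus M}$ is automatically a non-symmetric operad. Thus the statement reduces to verifying the single multiplication identity $\pi_{A\oplus M}\circ_1\pi_{A\oplus M}=\pi_{A\oplus M}\circ_2\pi_{A\oplus M}$.

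First I would evaluate both partial composites on a general label $y\in U_3$ and inputs $(a,m),(b,n),(c,p)\in A\oplus M$, using (\ref{loday-par}) together with the defining formula for $\pi_{A\oplus M}$. Abbreviate $r_0=R_0(2;2,1)y$, $r_1=R_1(2;2,1)y$, $s_0=R_0(2;1,2)y$, $s_2=R_2(2;1,2)y$. A direct expansion shows that the $A$-component of $\pi_{A\oplus M}\circ_i\pi_{A\oplus M}$ is exactly $(\pi\circ_i\pi)(y;a,b,c)$. Hence on the $A$-factor the required identity is precisely $\pi\circ_1\pi=\pi\circ_2\pi$, which holds because $\pi$ is a multiplication, i.e.\ because $A$ is a Loday-type algebra.

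The heart of the matter is the $M$-component. Since $\theta_1,\theta_2$ are multilinear and the $M$-entry of $\pi_{A\oplus M}$ is linear in the $M$-entries of its inputs, every monomial occurring in the $M$-component is linear in exactly one of $m,n,p$, with the other two slots filled by elements of $A$. Separating the $\circ_1$-side from the $\circ_2$-side into these three homogeneous pieces, the terms linear in $p$ give $\theta_1(r_0;\pi(r_1;a,b),p)=\theta_1(s_0;a,\theta_1(s_2;b,p))$, the terms linear in $m$ give $\theta_2(r_0;\theta_2(r_1;m,b),c)=\theta_2(s_0;m,\pi(s_2;b,c))$, and the terms linear in $n$ give $\theta_2(r_0;\theta_1(r_1;a,n),c)=\theta_1(s_0;a,\theta_2(s_2;n,c))$. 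After the harmless renaming of free variables these are, respectively, the first, second, and third defining relations of a representation, so all three hold by hypothesis.

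Finally, I would invoke the $\mathbb{K}$-linear independence of the three homogeneous components (multilinearity in $m,n,p$) to conclude that the $M$-components of the two composites agree, completing the verification that $\pi_{A\oplus M}\circ_1\pi_{A\oplus M}=\pi_{A\oplus M}\circ_2\pi_{A\oplus M}$. The only delicate point — the part that needs care rather than ingenuity — is the bookkeeping of the structure-function labels $R_0,R_1,R_2$ at arities $(2;1,2)$ versus $(2;2,1)$: one must confirm that each homogeneous piece carries precisely the label combination appearing in the matching representation axiom, so that the three axioms account for the entire $M$-component with nothing left over. Once the labels are lined up, the remaining computation is purely mechanical.
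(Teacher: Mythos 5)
Your proof is correct and is precisely the direct verification the paper leaves implicit (its proof reads only ``Straightforward''): the $A$-component reduces to $\pi\circ_1\pi=\pi\circ_2\pi$, and the three homogeneous pieces of the $M$-component match the three representation axioms with the labels $R_0(2;2,1)y$, $R_1(2;2,1)y$, $R_0(2;1,2)y$, $R_2(2;1,2)y$ lining up exactly as you state. The only superfluous step is invoking linear independence of the homogeneous pieces at the end --- equality of the sums follows by simply adding the three identities, no independence needed.
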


\begin{proof}
Straightforward.
\end{proof}

\subsection{Cohomology with coefficients}

Let $A$ be a Loday-type algebra and $M$ be a representation of it. Define the group of $n$-cochains by
\begin{align}\label{comp-with-co}
C^n(A, M) := \text{Hom}_\mathbb{K} (\mathbb{K}[U_n] \otimes A^{\otimes n}, M), ~~~ \text{for } n \geq 1.
\end{align}
The coboundary operator $\delta : C^n(A, M) \rightarrow C^{n+1} (A, M)$ is given by
\begin{align*}
&(\delta f) (r; a_1 , \ldots, a_{n+1}) \\
&=  \theta_1 \big( R_0 (2;1,n) r; ~a_1, f (R_2 (2;1,n)r; a_2, \ldots, a_{n+1})   \big) \\
&+ \sum_{i=1}^n  (-1)^i~ f \big(  R_0 (n; 1, \ldots, 2, \ldots, 1)r;~ a_1, \ldots, a_{i-1}, \pi (R_i (1, \ldots, 2, \ldots, 1)r; a_i, a_{i+1}), a_{i+2}, \ldots, a_{n+1}   \big) \\
&+ (-1)^{n+1} ~\theta_2 \big( R_0 (2; n, 1) r;~ f (R_1 (2;n,1)r; a_1, \ldots, a_n), a_{n+1}   \big),
\end{align*}
for $r \in U_{n+1}$ and $a_1, \ldots, a_{n+1} \in A.$ We denote the corresponding cohomology groups by $H^n(A, M)$, for $n \geq 1$.

Like classical cases, $1$-cocycles in the above cochain complex are called derivations on $A$ with values in the representation $M$.

When we consider the case of a dialgebra, our cohomology coincides with that of Frabetti \cite{frab} and in the case of a dendriform algebra, it coincides with the explicit dendriform cohomology given in \cite{das4}.

\begin{remark}\label{gers-rem}
When $M = A$ with the representation given by $\theta_1 = \theta_2 = \pi$, then up to a sign, the above coboundary map coincides with the one induced from the multiplication $\pi$ (see Section \ref{sec-2}). Therefore, it follows from the discussions of Section \ref{sec-2} that the cohomology of $A$ (with coefficients in itself) inherits a Gerstenhaber algebra structure. As a consequence, the cohomology of dialgebra, dendriform algebra, dendriform trialgebra, quadri-algebra, ennea-algebra and their hom analogs have Gerstenhaber structure on their cohomology. The first observation also ensures that $\delta^2 = 0$ for the coboundary map defined above with coefficients in any arbitrary representation.
\end{remark}
\begin{remark}\label{comp-two-cohomo}
If $\mathcal{P}$ is a non-symmetric operad defining the type of a Loday algebra $A$, then we have seen in Section \ref{sec-4} that the algebra structure on $A$ is given by a Maurer-Cartan element on the gLa induced from the operad $\mathcal{O}$. See the introduction of Section \ref{sec-4} for the operad $\mathcal{O}$. The coboundary operator for the operadic cohomology of $A$ (with coefficients in itself) is induced by the Maurer-Cartan element. Note that the same Maurer-Cartan element is a multiplication on the operad $\mathcal{O}$, and the differential induced from the multiplication is same as the one induced from the Maurer-Cartan element. Hence the corresponding cohomology groups are the same.

When one considers the cohomology of $A$ with coefficients in a representation $M$, one may first consider the semi-direct product algebra on $A \oplus M$. Then the above cochain complex (\ref{comp-with-co}) is a subcomplex of the cohomology of $A \oplus M$ with coefficients in itself. Therefore, by the same argument as above and from the definition of the operadic cohomology $H^\bullet_\mathcal{P}(A, M)$ with coefficients \cite{bala},  we have $H^\bullet_\mathcal{P}(A, M)$ coincides with the cohomology $H^\bullet (A, M)$ induced from the operad with multiplication.
\end{remark}

\begin{remark}\label{def-rem}
In the previous section, we define deformations of a Loday-type algebra $A$ as deformations of the corresponding multiplication. It follows from Remark \ref{gers-rem} that the results of Section \ref{sec-3} can be stated for Loday-type algebras as follows:
\begin{enumerate}
\item The second cohomology group of $A$ corresponds bijectively to the set of equivalence classes of infinitesimal deformations.
\item The vanishing of the second cohomology of $A$ implies that $A$ is rigid.
\item The vanishing of the third cohomology allows one to extend a finite order deformation of $A$ to the next order.
\end{enumerate}
For dialgebras and dendriform algebras, the corresponding results have been proved in \cite{maj-mukh, das4}.
\end{remark}

\subsection{Abelian extensions}
In this subsection, we show that the second cohomology group of a Loday-type algebra can be described by equivalence classes of abelian extensions. A similar result for operadic cohomology was given in \cite{bala2}. We start with the following definition.

\begin{defn}
Let $A$ and $B$ be two Loday algebras of the same type. A morphism between them is given by a linear map $f : A \rightarrow B$ satisfying
\begin{align*}
f (\pi_A (r; a, a')) = \pi_B (r; f(a), f(a')),
\end{align*}
for all $r \in U_2,~ a, a' \in A,$ where $\pi_A$ and $\pi_B$ denote the multiplications corresponding to the algebra structures on $A$ and $B$, respectively.
\end{defn}

Let $A$ be a Loday-type algebra and $M$ be a vector space. Note that $M$ can be considered as a Loday algebra of the same type with the trivial multiplication $\pi_M = 0$.

\begin{defn}
An abelian extension of $A$ by $M$ is given by an extension
\[
\xymatrix{
0 \ar[r] & M \ar[r]^{i} & E \ar[r]^{j} & A \ar[r] & 0
}
\]
of Loday algebras (of the same type) such that the sequence is split over $\mathbb{K}$.
\end{defn}

An abelian extension induces an $A$-representation on $M$ via the actions
\begin{align*}
\theta_1 (r ; a, m) =~ \pi_E (r ; s (a), i (m)) ~~~\text{ and }~~~
\theta_2 (r ; m, a) =~ \pi_E (r ; i (m), s (a)),
\end{align*}
for $r \in U_2$, $a \in A, ~m \in M$ and $s :A \rightarrow E$ is any section corresponding to the $\mathbb{K}$-splitting. One can easily verify that this action is independent of the choice of $s$.

Two such abelian extensions are said to be equivalent if there is a morphism $\phi : E \rightarrow E'$ between Loday-type algebras which makes the following diagram commute
\[
\xymatrix{
 0 \ar[r] & M \ar@{=}[d] \ar[r]^{i} & E \ar[d]^\phi \ar[r]^{j} & A \ar@{=}[d] \ar[r] & 0 \\
 0 \ar[r] & M \ar[r]_{i'} & E' \ar[r]_{j'} & A \ar[r] & 0.
}
\]

Next, fix an $A$-representation $M$. We denote by $\mathcal{E}xt (A, M)$ the equivalence classes of abelian extensions of $A$ by $M$ for which the induced representation on $M$ is the prescribed one.

\begin{thm}
There is a bijection: $H^2 (A, M) \cong \mathcal{E}xt (A, M).$
\end{thm}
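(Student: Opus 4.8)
The plan is to construct the standard bijection between $H^2(A,M)$ and $\mathcal{E}xt(A,M)$ by building explicit maps in both directions and verifying they are mutually inverse and well-defined on equivalence classes. The key insight is that an abelian extension $0 \to M \xrightarrow{i} E \xrightarrow{j} A \to 0$ that splits over $\mathbb{K}$ allows us to identify $E \cong A \oplus M$ as a vector space via a section $s$, and the only remaining data is how the multiplication $\pi_E$ fails to respect this direct-sum decomposition. Since the induced representation on $M$ is fixed, the obstruction to $\pi_E$ being exactly the semidirect product multiplication (from the Semi-direct product Proposition) is precisely a bilinear map $\psi \in \mathrm{Hom}_{\mathbb{K}}(\mathbb{K}[U_2] \otimes A^{\otimes 2}, M) = C^2(A,M)$.

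\medskip

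First I would fix a representation $M$ and, given an abelian extension with section $s$, define $\psi \in C^2(A,M)$ by the formula $\psi(r; a, a') = \pi_E(r; s(a), s(a')) - s(\pi_A(r; a, a'))$ for $r \in U_2$ and $a, a' \in A$; this lands in $M \cong i(M)$ because $j$ is an algebra morphism. Next I would verify that $\psi$ is a $2$-cocycle, i.e. $\delta \psi = 0$: this is a direct computation using the associativity relation $\pi_E \circ \pi_E = 0$ (equivalently $\pi_E \circ_1 \pi_E = \pi_E \circ_2 \pi_E$) expressed via the structure functions $\{R_0, R_i\}$, together with the three defining relations of a representation, matching the three terms in the coboundary formula $\delta f$. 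Conversely, given a $2$-cocycle $\psi$, I would define a multiplication on $E := A \oplus M$ by $\pi_E(r; (a,m),(a',m')) = (\pi_A(r; a, a'),\ \theta_1(r; a, m') + \theta_2(r; m, a') + \psi(r; a, a'))$ and check that the cocycle condition $\delta \psi = 0$ is exactly what makes $\pi_E$ a multiplication (i.e. a Loday-type algebra of the correct type), so that $E$ with the obvious inclusion and projection is an abelian extension inducing the prescribed representation.

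\medskip

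Then I would show these two constructions descend to mutually inverse maps on equivalence classes. For well-definedness, I would check that changing the section $s$ to another section $s'$ (with $s' - s$ factoring through $M$, giving a map $\phi_1 \in C^1(A,M) = \mathrm{Hom}_{\mathbb{K}}(\mathbb{K}[U_1] \otimes A, M)$) changes $\psi$ by exactly $\delta \phi_1$, so the cohomology class is independent of the section. For equivalent extensions, an isomorphism $\phi : E \to E'$ commuting with the identity maps on $M$ and $A$ induces the same change-of-section relation, hence the same class in $H^2(A,M)$. In the other direction, cohomologous cocycles $\psi$ and $\psi + \delta\phi_1$ produce equivalent extensions via the isomorphism $(a,m) \mapsto (a, m + \phi_1(a))$, which one checks is an algebra morphism precisely because the two multiplications differ by $\delta\phi_1$.

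\medskip

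The main obstacle I expect is the bookkeeping in the cocycle verification: because the coboundary operator $\delta$ and the multiplication are both written in terms of the structure functions $R_0(2;1,2)$, $R_2(2;1,2)$, $R_0(2;2,1)$, $R_1(2;2,1)$ evaluated on elements $y \in U_3$, matching the associativity of $\pi_E$ against the three-term expression $\delta\psi$ requires carefully tracking how these functions interact under the partial compositions (\ref{loday-par}). The cleanest route, which I would adopt to avoid this, is to invoke the Semi-direct product Proposition and Remark \ref{comp-two-cohomo}: the semidirect product structure on $A \oplus M$ realizes $M$ as a representation, the cochain complex $C^\bullet(A,M)$ sits inside the self-cohomology of $A \oplus M$ as a subcomplex, and the cocycle identity $\delta\psi = 0$ is then identified with the operadic associativity condition $\pi_E \circ \pi_E = 0$ on the deformed multiplication. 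This reduces the verification to the already-established fact that $\delta^2 = 0$ and the known correspondence for $H^2$ of operads with multiplication, sidestepping the raw combinatorics of the structure functions.
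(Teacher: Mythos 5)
Your proposal is correct and takes essentially the same approach as the paper: the same twisted semidirect-product multiplication $\pi_E(r;(a,m),(b,n)) = (\pi_A(r;a,b),\ \theta_1(r;a,n)+\theta_2(r;m,b)+\psi(r;a,b))$ built from a $2$-cocycle, the same extraction of a cocycle from a section of an extension, and the same map $(a,m)\mapsto (a,m+g(a))$ matching cohomologous cocycles with equivalent extensions. Your closing suggestion, verifying $\delta\psi=0 \Leftrightarrow \pi_E\circ\pi_E=0$ by regarding $C^\bullet(A,M)$ as a subcomplex of the cochain complex of the semidirect product (Remark \ref{comp-two-cohomo}), is a clean way to carry out the verification the paper dismisses as ``easy to verify,'' but it does not change the structure of the argument.
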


\begin{proof}
Let $f \in C^2 (A, M)$ be a $2$-cocycle. We consider the $\mathbb{K}$-module $E = A \oplus M$ and define a map $\pi_E : \mathbb{K}[U_2] \otimes E^{\otimes 2} \rightarrow E$ by
\begin{align*}
\pi_E (r; (a, m), (b, n)) =~& ( \pi (r; a, b), ~\theta_1 (r; a , n) + \theta_2 ( r; m, b) + f (r; a, b)),
\end{align*}
for $r \in U_2$ and $(a,m), (b, n) \in E.$
(Observe that when $f =0$ this is the semi-direct product.)
Using the fact that $f$ is a $2$-cocycle, it is easy to verify that $\pi_E$ defines a Loday algebra structure (of the same type) on $E$. Moreover, $0 \rightarrow M \rightarrow E \rightarrow A \rightarrow 0$ defines an abelian extension with the obvious splitting. 
Let $\pi_E' : \mathbb{K}[U_2] \otimes E^{\otimes 2} \rightarrow E$ be the Loday-type algebra structure on $E$
associated to the cohomologous $2$-cocycle $f - \delta (g)$, for some $g \in C^1 (A, M)$. The equivalence between abelian extensions $(E, \pi_E) $ and $(E, \pi_E')$ is given by $(a, m) \mapsto (a, m + g (a))$. Therefore, the map $H^2 (A, M) \rightarrow \mathcal{E}xt (A, M) $ is well defined.

Conversely, given an extension 
$0 \rightarrow M \xrightarrow{i} E \xrightarrow{j} A \rightarrow 0$ with splitting $s$, we may consider $E = A \oplus M$ and $s$ is the map $s (a) = ( a, 0).$ With respect to this splitting, the maps $i$ and $j$ are the obvious ones. Since $j \circ \pi_E (r; (a, 0), (b, 0)) = \pi (r; a, b)$ as $j$ is an algebra map, we have $\pi_E (r; (a, 0), ( b, 0)) = (f (r; a, b), \pi (r; a, b))$, for some $f \in C^2 (A, M).$ 
Since $\pi_E$ defines a Loday algebra structure on $E$, it follows 
that $f$ is a $2$-cocycle. Similarly, one can observe that any two equivalent extensions are related by a map $E = A \oplus M \xrightarrow{\phi} A \oplus M = E'$, $(a, m) \mapsto (a, m + g(a))$ for some $g \in C^1 (A, M)$. Since $\phi$ is an algebra morphism, we have
\begin{align*}
\phi \circ \pi_E (r; ( a, 0), ( b, 0)) = \pi'_{E} (r; \phi ( a, 0) , \phi (b, 0))
\end{align*}
which implies that $f' (r; a, b) = f (r; a, b) - (\delta g)(r; a, b)$. Here $f'$ is the $2$-cocycle induced from the extension $E'$. This shows that the map $\mathcal{E}xt (A, M) \rightarrow H^2 (A, M)$ is well defined. Moreover, these two maps are inverses to each other. Hence the proof.
\end{proof}

\section{Deformations of morphisms}
In this section, we study deformations of morphisms between Loday-type algebras. The results of this section are parallel to the classical cases (see, for example, \cite{gers-sch, yau2}).

Let $A$ and $B$ be two Loday algebras of the same type and $f : A \rightarrow B$ be a morphism. Then $B$ can be considered as a representation of $A$ via $f$ as follows:
\begin{align*}
\theta_1 (r ; a, b) = \pi_B (r ; f(a), b) ~~~ \qquad \quad ~~~  \theta_2 (r ; b, a) = \pi_B (r; b, f(a)),
\end{align*}
for all $r \in U_2$, $a \in A$ and $b \in B$, where $\pi_B : \mathbb{K}[U_2] \otimes B^{\otimes 2} \rightarrow B$ denotes the multiplication defining the algebra structure on $B$.

In such a case, we define a new cochain complex whose $n$-th cochain group is given by 
\begin{align*}
C^n(f, f) =~& C^n (A, A) \times C^n (B, B) \times C^{n-1} (A, B) \\
=~& \text{Hom}_\mathbb{K} (\mathbb{K}[U_n] \otimes A^{\otimes n}, A)  ~\times ~\text{Hom}_\mathbb{K} (\mathbb{K}[U_n] \otimes B^{\otimes n}, B) ~ \times ~ \text{Hom}_\mathbb{K} (\mathbb{K}[U_{n-1}] \otimes A^{\otimes n-1}, B)
\end{align*}
and the differential $\delta_f : C^n (f, f) \rightarrow C^{n+1} (f, f)$ is defined by
\begin{align*}
\delta_f (\phi, \psi, \zeta ) = (\delta_A \phi,~ \delta_B \psi,~ f \circ \phi - \psi \circ f^{\otimes n} - \delta \zeta)
\end{align*}
where $\delta_A , \delta_B$ denote the coboundary maps defining the cohomology of $A$ and $B$, respectively, and $\delta$ denotes the coboundary map defining the cohomology of $A$ with coefficients in $B$.

\begin{prop}
With the above notations $(C^n (f,f), \delta_f)$ is a cochain complex.
\end{prop}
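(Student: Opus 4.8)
The plan is to reduce the statement to the cocycle identity $\delta_f^2 = 0$ and to verify it componentwise. Writing $\delta_f(\phi, \psi, \zeta) = (\delta_A\phi,\, \delta_B\psi,\, \eta)$ with $\eta = f\circ\phi - \psi\circ f^{\otimes n} - \delta\zeta$, and applying $\delta_f$ a second time, the first two slots of $\delta_f^2(\phi,\psi,\zeta)$ are $\delta_A^2\phi$ and $\delta_B^2\psi$. Both vanish, since $\delta_A$ and $\delta_B$ are the coboundary operators for the cohomology of $A$ and of $B$ with coefficients in themselves, and $\delta^2 = 0$ for such operators was recorded in Remark \ref{gers-rem}. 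Hence all the content lies in the third slot.

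For the third component, applying $\delta_f$ to the triple $(\delta_A\phi,\, \delta_B\psi,\, \eta)$ produces
\[
f\circ(\delta_A\phi) - (\delta_B\psi)\circ f^{\otimes(n+1)} - \delta\eta.
\]
Expanding $\delta\eta = \delta(f\circ\phi) - \delta(\psi\circ f^{\otimes n}) - \delta^2\zeta$ and using $\delta^2\zeta = 0$ (here $\delta$ denotes the coboundary of the cohomology of $A$ with coefficients in the representation $B$, which squares to zero by Remark \ref{gers-rem}), the vanishing of this slot becomes equivalent to the two compatibility identities
\[
f\circ(\delta_A\phi) = \delta(f\circ\phi),
\qquad
(\delta_B\psi)\circ f^{\otimes(n+1)} = \delta(\psi\circ f^{\otimes n}).
\]

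Establishing these two identities is the only real work; the hard part will be keeping track of the structure-function arguments correctly rather than any conceptual difficulty, since both identities are direct consequences of the morphism property $f(\pi_A(r;a,a')) = \pi_B(r; f(a), f(a'))$ together with the explicit $A$-representation on $B$ given by $\theta_1(r;a,b) = \pi_B(r;f(a),b)$ and $\theta_2(r;b,a) = \pi_B(r;b,f(a))$. For the first identity I would expand $\delta_A\phi$ via the coboundary formula of Section \ref{sec-5} taken with $\theta_1 = \theta_2 = \pi_A$, then apply $f$ and push it through every occurrence of $\pi_A$ using the morphism condition: the two boundary terms turn into $\theta_1$ and $\theta_2$ applied to $f\circ\phi$, while each interior term becomes $f\circ\phi$ evaluated on the corresponding $\pi_A$-contracted string, and the signs match slot by slot, yielding exactly $\delta(f\circ\phi)$. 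The second identity is handled symmetrically: expanding $\delta_B\psi$ and substituting $b_j = f(a_j)$, I would rewrite each interior factor $\pi_B(r; f(a_i), f(a_{i+1}))$ as $f(\pi_A(r;a_i,a_{i+1}))$, so that the interior terms become $(\psi\circ f^{\otimes n})$ evaluated on $\pi_A$-contracted strings and the two boundary terms collapse to $\theta_1$ and $\theta_2$ acting on $\psi\circ f^{\otimes n}$, giving $\delta(\psi\circ f^{\otimes n})$.

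Granting the two identities, the third component of $\delta_f^2(\phi,\psi,\zeta)$ equals $\delta(f\circ\phi) - \delta(\psi\circ f^{\otimes n}) - \delta(f\circ\phi) + \delta(\psi\circ f^{\otimes n}) = 0$. Therefore $\delta_f^2 = 0$, and $(C^\bullet(f,f), \delta_f)$ is a cochain complex.
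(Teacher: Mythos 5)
Your proof is correct and follows essentially the same route as the paper: compute $(\delta_f)^2$ componentwise, observe that everything reduces to the two compatibility identities $f \circ \delta_A \phi = \delta(f \circ \phi)$ and $(\delta_B \psi) \circ f^{\otimes(n+1)} = \delta(\psi \circ f^{\otimes n})$, and verify these from the morphism property of $f$ and the induced representation of $A$ on $B$. The only difference is that you sketch the verification of these two identities, which the paper dismisses as ``a direct verification.''
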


\begin{proof}
We have
\begin{align*}
(\delta_f)^2 (\phi, \psi, \zeta) =~& \delta_f (\delta_A \phi,~ \delta_B \psi,~ f \circ \phi - \psi \circ f^{\otimes n} - \delta \zeta) \\
=~& (\delta_A^2 \phi,~ \delta_B^2 \psi,~  f \circ \delta_A \phi - (\delta_B \psi) \circ f^{\otimes (n+1)} - \delta (f \circ \phi - \psi \circ f^{\otimes n} - \delta \zeta)).
\end{align*}
It follows from a direct verification that $f \circ \delta_A \phi = \delta (f \circ \phi)$ and $(\delta_B \psi) \circ f^{\otimes (n+1)} = \delta (\psi \circ f^{\otimes n})$. Hence $(\delta_f)^2 = 0$ as $\delta_A, \delta_B$ and $\delta$ are differentials.
\end{proof}

The complex $(C^\bullet (f,f), \delta_f)$ is called the deformation complex associated to the algebra morphism $f$. The corresponding cohomology groups are denoted by $H^\bullet (f,f)$. This cohomology is related to the cohomology of $A$ and $B$ in the following way.

\begin{prop}\label{when-def-mor-zero}
If $H^n (A, A), ~ H^n(B, B)$ and $H^{n-1} (A, B)$ are all trivial, then $H^n (f,f)$ is so.
\end{prop}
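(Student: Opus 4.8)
The plan is to run the standard argument showing that, in a mapping-cone-type complex such as $(C^\bullet(f,f), \delta_f)$, the vanishing of the three constituent cohomologies forces every cocycle to be a coboundary. I would begin by unwinding the cocycle condition: an element $(\phi, \psi, \zeta) \in C^n(f,f)$ satisfies $\delta_f(\phi, \psi, \zeta) = 0$ exactly when $\delta_A \phi = 0$, $\delta_B \psi = 0$, and $f \circ \phi - \psi \circ f^{\otimes n} - \delta \zeta = 0$. In particular $\phi$ is an $n$-cocycle in $C^\bullet(A,A)$ and $\psi$ is an $n$-cocycle in $C^\bullet(B,B)$. The goal is to produce a triple $(\phi', \psi', \zeta') \in C^{n-1}(f,f)$ with $\delta_f(\phi', \psi', \zeta') = (\phi, \psi, \zeta)$.

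The first two components are handled independently and involve no interaction between the two algebras. Using the hypotheses $H^n(A,A) = 0$ and $H^n(B,B) = 0$, I would choose $\phi' \in C^{n-1}(A,A)$ and $\psi' \in C^{n-1}(B,B)$ with $\delta_A \phi' = \phi$ and $\delta_B \psi' = \psi$. The only remaining task is then to arrange the mixed (third) component correctly.

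The crux is the construction of $\zeta'$, and this is where the three pieces of data genuinely interact. Set $\eta := f \circ \phi' - \psi' \circ f^{\otimes(n-1)} - \zeta \in C^{n-1}(A,B)$. I claim $\eta$ is a $\delta$-cocycle. Applying $\delta$ and using the two chain-map identities established in the previous proposition, namely $f \circ \delta_A \phi' = \delta(f \circ \phi')$ and $(\delta_B \psi') \circ f^{\otimes n} = \delta(\psi' \circ f^{\otimes(n-1)})$, together with $\delta_A \phi' = \phi$ and $\delta_B \psi' = \psi$, one computes
\[
\delta \eta = f \circ \phi - \psi \circ f^{\otimes n} - \delta \zeta,
\]
which is precisely the third component of the cocycle condition on $(\phi, \psi, \zeta)$ and therefore vanishes. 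Since $H^{n-1}(A,B) = 0$, there exists $\zeta' \in C^{n-2}(A,B)$ with $\delta \zeta' = \eta$. To finish, I would verify that $(\phi', \psi', \zeta')$ is the desired preimage: its first two $\delta_f$-components are $\delta_A \phi' = \phi$ and $\delta_B \psi' = \psi$, while the third is $f \circ \phi' - \psi' \circ f^{\otimes(n-1)} - \delta \zeta' = (\eta + \zeta) - \eta = \zeta$. Thus $\delta_f(\phi', \psi', \zeta') = (\phi, \psi, \zeta)$, so every $n$-cocycle is a coboundary and $H^n(f,f) = 0$. The hard part will be the cocycle verification for $\eta$: it is the only step in which the three components interact, and it rests entirely on the two chain-map identities from the preceding proposition; everything else is a routine diagram chase.
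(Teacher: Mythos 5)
Your proposal is correct and follows essentially the same argument as the paper: pull back $\phi$ and $\psi$ using $H^n(A,A)=H^n(B,B)=0$, show that $f\circ\phi'-\psi'\circ f^{\otimes(n-1)}-\zeta$ is a $\delta$-cocycle via the two chain-map identities, and then use $H^{n-1}(A,B)=0$ to correct the third component. The only difference is cosmetic — you name this element $\eta$ and spell out the final verification that $\delta_f(\phi',\psi',\zeta')=(\phi,\psi,\zeta)$, which the paper leaves implicit.
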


\begin{proof}
Let $(\phi, \psi, \zeta) \in C^n (f,f)$ be an $n$-cocycle. Then it follows that $\phi \in C^n (A, A), ~ \psi \in C^n (B, B)$ are $n$-cocycles and $f \circ \phi - \psi \circ f^{\otimes n} - \delta \zeta = 0$. Hence by the hypothesis, there exist $(n-1)$-cochains $\phi' \in C^{n-1} (A, A)$ and $\psi' \in C^{n-1}(B, B)$ such that $\phi = \delta_A \phi'$ and $\psi = \delta_B \psi'$. Moreover,
\begin{align*}
\delta (f \circ \phi' - \psi' \circ f^{\otimes (n-1)} - \zeta) =~& f \circ \delta_A \phi' - (\delta_B \psi') \circ f^{\otimes n} - \delta \zeta \\
=~& f \circ \phi - \psi \circ f^{\otimes n} - \delta \zeta = 0.
\end{align*}
Hence, $f \circ \phi' - \psi' \circ f^{\otimes (n-1)} - \zeta \in C^{n-1} (A, B)$ is an $(n-1)$-cocycle. By the hypothesis, there exists an element $\zeta' \in C^{n-2} (A, B)$ such that $f \circ \phi' - \psi' \circ f^{\otimes (n-1)} - \zeta = \delta \zeta'$. Thus, it follows that
$(\phi, \psi, \zeta) = \delta_f (\phi', \psi', \zeta')$ is a coboundary.
\end{proof}

Unlike deformations of Loday-type algebras, deformations of morphisms cannot describe by using multiplicative operads. The reason is the appearance of the third factor in the deformation complex of $f$.

\subsection{Deformations} In this subsection, we describe formal deformations of a morphism between Loday-type algebras and show that the above-defined cohomology controls such deformations.

\begin{defn}\label{defn-def-mor}
A deformation of $f$ is given by a triple $\theta_t = (\pi_{A, t}, \pi_{B, t}, f_t)$ in which
\begin{itemize}
\item $\pi_{A, t} = \sum_{i \geq 0} \pi_{A, i} t^i$ is a deformation of $A$;
\item $\pi_{B, t} = \sum_{i \geq 0} \pi_{B, i} t^i$ is a deformation of $B$;
\item $f_t = \sum_{i \geq 0} f_i t^i : A[[t]] \rightarrow B[[t]]$ is a morphism of algebras, where each $f_i : A \rightarrow B$ is a $\mathbb{K}$-linear map and $f_0 = f$.
\end{itemize}
\end{defn}

\begin{defn}
Two deformations $\theta_t = (\pi_{A, t}, \pi_{B, t}, f_t)$ and $\theta_t' = (\pi_{A, t}', \pi_{B, t}', f_t')$ of $f$ are said to be equivalent if there is a pair $\Phi_t = (\phi_{A, t}, \phi_{B, t})$ in which
\begin{itemize}
\item $\phi_{A, t} : A[[t]] \rightarrow A[[t]]$ is an equivalence between $\pi_{A, t}$ and $\pi_{A, t}'$;
\item $\phi_{B, t} : B[[t]] \rightarrow B[[t]]$ is an equivalence between $\pi_{B, t}$ and $\pi_{B, t}'$;
\item $f_t \circ \phi_{A, t} = \phi_{B, t} \circ f_t'.$
\end{itemize}
\end{defn}

\begin{prop}
The linear part $(\pi_{A, 1}, \pi_{B, 1}, f_1)$ of a deformation $\theta_t$ is a $2$-cocycle in the complex $( C^\bullet (f,f), \delta_f )$ whose cohomology class is determined by the equivalence class of $\theta_t$.
\end{prop}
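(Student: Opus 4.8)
The plan is to read off the cocycle condition $\delta_f(\pi_{A,1},\pi_{B,1},f_1)=0$ componentwise, by extracting the coefficient of $t$ from each of the three conditions in Definition \ref{defn-def-mor}. Since
\[
\delta_f(\phi,\psi,\zeta) = (\delta_A\phi,\ \delta_B\psi,\ f\circ\phi - \psi\circ f^{\otimes n} - \delta\zeta),
\]
the three slots decouple. For the first slot, $\pi_{A,t}$ is a deformation of $\pi_A$, so the coefficient of $t$ in $\pi_{A,t}\circ\pi_{A,t}=0$ is $d_{\pi_A}(\pi_{A,1})=0$; by Remark \ref{gers-rem} this is exactly $\delta_A\pi_{A,1}=0$. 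The same argument applied to $\pi_{B,t}$ gives $\delta_B\pi_{B,1}=0$, so the first two components of $\delta_f(\pi_{A,1},\pi_{B,1},f_1)$ vanish.

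For the third slot I would expand the morphism condition $f_t(\pi_{A,t}(r;a,a')) = \pi_{B,t}(r;f_t(a),f_t(a'))$ and collect the coefficient of $t$, obtaining
\[
f(\pi_{A,1}(r;a,a')) + f_1(\pi_A(r;a,a')) = \pi_{B,1}(r;f(a),f(a')) + \pi_B(r;f_1(a),f(a')) + \pi_B(r;f(a),f_1(a')).
\]
Using the representation of $A$ on $B$ via $f$, namely $\theta_1(r;a,b)=\pi_B(r;f(a),b)$ and $\theta_2(r;b,a)=\pi_B(r;b,f(a))$, the two mixed terms become $\theta_1(r;a,f_1(a'))$ and $\theta_2(r;f_1(a),a')$; together with $-f_1(\pi_A(r;a,a'))$ these are precisely the three terms of $(\delta f_1)(r;a,a')$. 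Rearranging then gives $f\circ\pi_{A,1} - \pi_{B,1}\circ f^{\otimes 2} - \delta f_1 = 0$, which is the vanishing of the third component. Hence $(\pi_{A,1},\pi_{B,1},f_1)$ is a $2$-cocycle.

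For the claim about cohomology classes, suppose $\theta_t$ and $\theta_t'$ are equivalent via $\Phi_t=(\phi_{A,t},\phi_{B,t})$. Extracting the coefficient of $t$ from the equivalence $\phi_{A,t}\circ\pi_{A,t}' = \{\pi_{A,t}\}\{\phi_{A,t},\phi_{A,t}\}$ and its $B$-analogue gives, exactly as in the infinitesimal discussion of Section \ref{sec-3}, the identities $\pi_{A,1}'-\pi_{A,1}=\delta_A\phi_{A,1}$ and $\pi_{B,1}'-\pi_{B,1}=\delta_B\phi_{B,1}$. Expanding $f_t\circ\phi_{A,t}=\phi_{B,t}\circ f_t'$ to first order yields $f_1'-f_1 = f\circ\phi_{A,1} - \phi_{B,1}\circ f$. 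Comparing with the differential, these three identities say precisely that
\[
(\pi_{A,1}',\pi_{B,1}',f_1') - (\pi_{A,1},\pi_{B,1},f_1) = \delta_f(\phi_{A,1},\phi_{B,1},0),
\]
with $(\phi_{A,1},\phi_{B,1},0)\in C^1(f,f)$, so the two linear parts represent the same class in $H^2(f,f)$.

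The only point requiring genuine care is the reorganization in the third slot: one must confirm that the cross terms $\pi_B(r;f_1(a),f(a'))$ and $\pi_B(r;f(a),f_1(a'))$ coming from the first-order morphism condition really coincide with the $\theta_1$- and $\theta_2$-terms of the coboundary $\delta f_1$, and that $f_1(\pi_A(r;a,a'))$ matches its middle term, once the structure functions $R_0,R_i$ (which act trivially on a single evaluation at $r\in U_2$) are taken into account. The signs must also be checked against the conventions fixed in Remark \ref{gers-rem} and in the definition of $\delta_f$. Beyond this matching, every identity used is merely a first-order Taylor coefficient, so the argument is otherwise routine.
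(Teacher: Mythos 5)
Your proposal is correct and follows essentially the same route as the paper's proof: both extract the coefficient of $t$ componentwise from the two deformation equations, the morphism condition $f_t\circ\pi_{A,t}=\pi_{B,t}\circ(f_t\otimes f_t)$, and the equivalence conditions, then identify the resulting first-order identities with the three components of $\delta_f$, yielding the cocycle condition and the coboundary relation $(\pi_{A,1}',\pi_{B,1}',f_1')-(\pi_{A,1},\pi_{B,1},f_1)=\delta_f(\phi_{A,1},\phi_{B,1},0)$. Your extra care in matching the cross terms $\pi_B(r;f_1(a),f(a'))$ and $\pi_B(r;f(a),f_1(a'))$ with the $\theta_1$- and $\theta_2$-terms of $\delta f_1$ is exactly the identification the paper performs implicitly.
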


\begin{proof}
Let $\theta_t = (\pi_{A, t}, \pi_{B,t}, f_t)$ be a deformation of $f$. Since $\pi_{A, t} = \sum_{i \geq 0} \pi_{A, i} t^i$ and $\pi_{B, t} = \sum_{i \geq 0} \pi_{B, i} t^i$ are deformations of $A$ and $B$, respectively, we have $\pi_{A, 1} \in \text{Hom}_\mathbb{K} (\mathbb{K}[U_2] \otimes A^{\otimes 2}, A)$ and $\pi_{B, 1} \in \text{Hom}_\mathbb{K} (\mathbb{K}[U_2] \otimes B^{\otimes 2}, B)$ are $2$-cocycles of $A$ and $B$, respectively. Moreover, $f_t : A[[t]] \rightarrow B[[t]]$ is a morphism implies that
\begin{align*}
f_t (\pi_{A, t} (r;a, b) ) = \pi_{B, t} (r, f_t(a), f_t (b)), 
\end{align*}
for all $r \in U_2$ and $a, b \in A$. By equating coefficients of $t^n$, for $n \geq 0$, we get
\begin{align*}
\sum_{i+j = n} f_i (\pi_{A, j} (r; a, b) ) = \sum_{i+j+k = n} \pi_{B, i} (r; f_j(a), f_k (b)), ~~~ \text{ for all } n \geq 0.
\end{align*}
(For $n = 0$, this identity is equivalent to the fact that $f : A \rightarrow B$ is a morphism of algebras.) For $n = 1$, we get
\begin{align*}
f (\pi_{A, 1} (r; a, b)) + f_1 (\pi_A (r;a,b)) = \pi_B (r; f(a), f_1(b)) + \pi_B (r; f_1 (a), f(b)) + \pi_{B, 1} (r; f(a), f(b)).
\end{align*}
This is equivalent to 
\begin{align*}
f \circ \pi_{A, 1} - \pi_{B, 1} \circ f^{\otimes 2} - \delta  (f_1) = 0.
\end{align*}
Therefore, we conclude that $\delta_f (\pi_{A,1}, \pi_{B,1}, f_1) = 0$.

\medskip

Finally, let $\theta_t = (\pi_{A, t}, \pi_{B, t}, f_t)$ and $\theta_t' = (\pi_{A, t}', \pi_{B,t}', f_t')$ be two equivalent deformations of $f$ and the equivalence is given by $\Phi_t = (\phi_{A, t}, \phi_{B,t}).$ Since $\phi_{A, t} : A[[t]] \rightarrow A[[t]]$ is an equivalence between the deformations $\pi_{A, t}$ and $\pi_{A, t}'$, we have $\pi_{A, 1}' - \pi_{A, 1} = \delta_A (\phi_{A, 1}).$ Similarly, we have $\pi_{B, 1}' - \pi_{B, 1} = \delta_B (\phi_{B, 1})$. Finally, the condition $f_t \circ \phi_{A, t} = \phi_{B, t} \circ f_t'$ implies that
\begin{align*}
f_1' - f_1 = f \circ \phi_{A, 1} - \phi_{B,1} \circ f.
\end{align*}
Thus, it follows that the difference $(\pi'_{A,1}, \pi'_{B, 1}, f'_1) - (\pi_{A, 1}, \pi_{B,1}, f_1) =  \delta_f ( \phi_{A, 1} , \phi_{B, 1}, 0).$ Hence the proof.
\end{proof}

The linear part $(\pi_{A,1}, \pi_{B, 1}, f_1)$ is called the infinitesimal of the deformation $\theta_t = (\pi_{A, t}, \pi_{B, t}, f_t)$. It follows that infinitesimals of deformations are $2$-cocycles and equivalent deformations have cohomologous infinitesimals. In general, if $(\pi_{A,1}, \pi_{B, 1}, f_1) = \cdots = (\pi_{A,n-1}, \pi_{B, n-1}, f_{n-1}) = (0,0,0)$, then $(\pi_{A,n}, \pi_{B, n}, f_n)$ is a $2$-cocycle.

\begin{defn}
A deformation $\theta_t = (\pi_{A, t}, \pi_{B,t}, f_t)$ of $f$ is called trivial if it is equivalent to the deformation $\theta_t' = (\pi_A, \pi_B, f)$.
A morphism $f : A \rightarrow B$ between same Loday-type algebras is called rigid if any deformation of $f$ is equivalent to a trivial deformation.
\end{defn}

\begin{prop}
A nontrivial deformation $\theta_t$ of $f$ is equivalent to a deformation $\theta_t'$ in which the first nonzero term  $(\pi_{A, p}', \pi_{B, p}', f_p')$ is a $2$-cocycle but not a coboundary.
\end{prop}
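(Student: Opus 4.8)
The plan is to mirror, component by component, the argument already carried out for a single multiplication (the proposition asserting that a nontrivial deformation of $\pi$ is equivalent to one whose first nonzero term is a cocycle but not a coboundary), while keeping the three pieces of a morphism deformation compatible. First I would normalize: replacing $\theta_t$ by an equivalent deformation if necessary, I may assume that $(\pi_{A,i}, \pi_{B,i}, f_i) = (0,0,0)$ for $1 \le i \le p-1$ and that $(\pi_{A,p}, \pi_{B,p}, f_p)$ is the first nonzero term. By the discussion preceding the statement, this first nonzero term is a $2$-cocycle in $(C^\bullet(f,f), \delta_f)$. If it is not a coboundary there is nothing to prove, so assume it is.

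Since the cochain complex begins in degree one, $C^0(A,B) = 0$, and hence the coboundary can be written as $(\pi_{A,p}, \pi_{B,p}, f_p) = -\delta_f(\phi_{A,p}, \phi_{B,p}, 0)$ for some pair $(\phi_{A,p}, \phi_{B,p}) \in C^1(A,A) \times C^1(B,B)$. Unwinding the definition of $\delta_f$, this reads $\pi_{A,p} = -\delta_A \phi_{A,p}$, $\pi_{B,p} = -\delta_B \phi_{B,p}$, and $f_p = \phi_{B,p}\circ f - f\circ\phi_{A,p}$. I would then set $\phi_{A,t} = \text{id} + \phi_{A,p}t^p$ and $\phi_{B,t} = \text{id} + \phi_{B,p}t^p$ and define the new deformation by
\[
\pi'_{A,t} = \phi_{A,t}^{-1}\circ\{\pi_{A,t}\}\{\phi_{A,t},\phi_{A,t}\},\quad
\pi'_{B,t} = \phi_{B,t}^{-1}\circ\{\pi_{B,t}\}\{\phi_{B,t},\phi_{B,t}\},\quad
f'_t = \phi_{B,t}^{-1}\circ f_t\circ\phi_{A,t}.
\]
By the single-multiplication proposition, $\pi'_{A,t}$ and $\pi'_{B,t}$ are deformations of $\pi_A$ and $\pi_B$ whose first nonzero terms occur in degree $\ge p+1$; this is exactly where $\pi_{A,p} = -\delta_A\phi_{A,p}$ and $\pi_{B,p} = -\delta_B\phi_{B,p}$ are used (note $\delta_A = d_{\pi_A}$ and $\delta_B = d_{\pi_B}$ on $1$-cochains). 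Because $\phi_{A,t}$ and $\phi_{B,t}$ intertwine the primed and unprimed multiplications they are algebra isomorphisms, so $f'_t$ is a composite of algebra morphisms and therefore a morphism from $(A[[t]], \pi'_{A,t})$ to $(B[[t]], \pi'_{B,t})$; moreover $\phi_{B,t}\circ f'_t = f_t\circ\phi_{A,t}$ by construction, so $(\phi_{A,t}, \phi_{B,t})$ realizes an equivalence $\theta'_t \sim \theta_t$.

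The one point needing the coefficient identity is the vanishing of the degree-$p$ term of $f'_t$, and I expect this to be the main obstacle. Expanding $f'_t = (\text{id} - \phi_{B,p}t^p + \cdots)\circ f_t\circ(\text{id} + \phi_{A,p}t^p)$ and collecting the coefficient of $t^p$ (using $f_i = 0$ for $0 < i < p$) gives
\[
f'_p = f_p + f\circ\phi_{A,p} - \phi_{B,p}\circ f,
\]
which vanishes precisely by the third component $f_p = \phi_{B,p}\circ f - f\circ\phi_{A,p}$ of the coboundary relation. The subtlety is that the very same pair $(\phi_{A,p}, \phi_{B,p})$ extracted from the coboundary must simultaneously cancel all three order-$p$ terms; the $A$- and $B$-parts are handled verbatim by the earlier result, and the $f$-part closes only because the equivalences of $f$ are generated exactly by such pairs, so that the third component of a coboundary contributing to $f$ is precisely $f\circ\phi_{A,p} - \phi_{B,p}\circ f$ with no residual inner term (this is where $C^0(A,B)=0$ enters). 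Thus $\theta'_t$ is an equivalent deformation whose first nonzero term lies in degree $\ge p+1$.

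Finally I would iterate. Applying the construction repeatedly produces a chain of equivalent deformations whose first nonzero term moves to strictly higher order at each step. Since the successive gauge transformations have leading order tending to infinity, their composite converges $t$-adically to an equivalence of $\theta_t$ with the resulting limit deformation. If the process never terminated in a cocycle that is not a coboundary, this limit would be the trivial deformation $(\pi_A, \pi_B, f)$, contradicting the nontriviality of $\theta_t$. Hence after finitely many steps the first nonzero term $(\pi'_{A,p}, \pi'_{B,p}, f'_p)$ is a $2$-cocycle that is not a coboundary, which is the desired conclusion.
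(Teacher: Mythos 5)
Your proof is correct and takes essentially the same route as the paper's: write the coboundary as $-\delta_f(\phi,\psi,0)$ (using $C^0(A,B)=0$), conjugate by the gauge pair $\phi_{A,t}=\mathrm{id}+\phi\, t^p$, $\phi_{B,t}=\mathrm{id}+\psi\, t^p$ with $f'_t=\phi_{B,t}^{-1}\circ f_t\circ\phi_{A,t}$, and iterate. Your explicit cancellation of the order-$p$ term of $f'_t$ and the $t$-adic convergence argument guaranteeing the iteration terminates for a nontrivial deformation merely make explicit details the paper leaves implicit.
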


\begin{proof}
Let $\theta_t = (\pi_{A, t}, \pi_{B, t}, f_t)$ be a deformation of $f$ in which $$(\pi_{A, 1}, \pi_{B, 1}, f_1) = \cdots = (\pi_{A, n-1}, \pi_{B, n-1}, f_{n-1}) = (0, 0, 0)$$ and $(\pi_{A, n}, \pi_{B, n}, f_n)$ is the first nonzero term. Then $(\pi_{A, n}, \pi_{B, n}, f_n)
$ is a $2$-cocycle in $C^2(f, f)$. Assume that it is a coboundary, say $(\pi_{A, n}, \pi_{B, n}, f_n)= - \delta_f (\phi, \psi, 0).$ 
Hence $\pi_{A, n} = - \delta_A \phi$, $\pi_{B, n} = - \delta_B \psi$ and $f_n =  - (f \circ \phi - \psi \circ f)$. Setting
\begin{align*}
\phi_{A,t} = \text{id}_A + \phi~ t^n    ~~~~ \text{ and } ~~~~\phi_{B, t} =  \text{id}_B + \psi~ t^n.
\end{align*}
Define $\theta_t' = (\pi_{A, t}', \pi_{B, t}', f_t')$ where $\pi_{A, t}' =  \phi_{A, t}^{-1} \circ \{ \pi_{A, t} \} \{ \phi_{A, t}, \phi_{A,t} \}$, $\pi_{B, t}' = \phi_{B, t}^{-1} \circ \{ \pi_{B, t} \} \{ \phi_{B,t} , \phi_{B, t} \}$ and $f_t' = \phi_{B, t}^{-1} \circ f_t \circ \phi_{A, t}$. Then $\theta_t'$ is a deformation of $f$ in which $(\pi_{A, 1}', \pi_{B,1}', f_1') = \cdots = (\pi_{A, n}', \pi_{B,n}', f_n') = (0,0, 0)$. If the first non-zero term is not a coboundary then we are done. If not then we can apply the same method to obtain a required deformation.
\end{proof}

\begin{thm}
If $H^2 (f, f) = 0$ then $f$ is rigid.
\end{thm}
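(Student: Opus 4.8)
The plan is to argue by contradiction, in direct parallel with the earlier proof that a multiplication $\pi$ with $H^2(\mathcal{O}, d_\pi) = 0$ is rigid. First I would take an arbitrary deformation $\theta_t = (\pi_{A,t}, \pi_{B,t}, f_t)$ of $f$ and suppose, for contradiction, that it is \emph{not} equivalent to the trivial deformation $(\pi_A, \pi_B, f)$. The goal is then to manufacture a nonzero cohomology class in $H^2(f,f)$, contradicting the hypothesis.

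The key step is to invoke the preceding proposition, which applies verbatim to $\theta_t$: a nontrivial deformation of $f$ is equivalent to a deformation $\theta_t'$ whose first nonzero term $(\pi_{A,p}', \pi_{B,p}', f_p')$ is a $2$-cocycle in $(C^\bullet(f,f), \delta_f)$ but \emph{not} a coboundary. This is precisely the output I want to confront with the hypothesis. Since $H^2(f,f) = 0$ asserts that every $2$-cocycle in this complex is a coboundary, the existence of such a term $(\pi_{A,p}', \pi_{B,p}', f_p')$ is impossible. Hence no nontrivial $\theta_t$ can exist, which means every deformation of $f$ is equivalent to the trivial one; that is, $f$ is rigid.

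Since $\theta_t$ was arbitrary and equivalence of deformations is the relation defining triviality in the cochain complex $(C^\bullet(f,f), \delta_f)$, this completes the argument. I do not expect a genuine obstacle here: all the substantive work has already been carried out in the preceding proposition (the normalization of a deformation so that its leading term represents a cohomology class), so the theorem is really a corollary obtained by feeding the vanishing of $H^2(f,f)$ into that proposition. The only point deserving a moment's care is the logical bookkeeping of the contrapositive, namely confirming that ``first nonzero term is a cocycle but not a coboundary'' is exactly what $H^2(f,f)=0$ excludes, so that the sole remaining possibility is that all higher terms vanish and $\theta_t$ is trivial.
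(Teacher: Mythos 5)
Your proof is correct and coincides with the paper's own reasoning: the paper states this theorem without proof, as an immediate corollary of the preceding proposition (any nontrivial deformation is equivalent to one whose first nonzero term is a $2$-cocycle that is not a coboundary), and your argument is exactly the contrapositive of that proposition combined with the hypothesis $H^2(f,f)=0$.
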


In the same spirit of Section \ref{sec-3}, we would like to extend a deformation of finite order to a deformation of next order. A deformation of order $n$ consists of a triple  $\theta_t = (\pi_{A, t}, \pi_{B, t}, f_t)$ of the form
\begin{align*}
\theta_t = (\pi_{A, t} = \sum_{i=0}^n \pi_{A, i} t^i,~ \pi_{B, t} = \sum_{i=0}^n \pi_{B,i} t^i,~ f_t = \sum_{i=0}^n f_i t^i)
\end{align*}
such that the conditions of Definition \ref{defn-def-mor} hold modulo $t^{n+1}$.
Suppose there is a $2$-cochain $\theta_{n+1} = (\pi_{A, n+1}, \pi_{B, n+1}, f_{n+1}) \in C^2 (f,f)$ such that
\begin{align*}
\overline{\theta}_t = \theta_t + \theta_{n+1} t^{n+1} = ( \sum_{i=0}^{n+1} \pi_{A, i} t^i,~ \sum_{i=0}^{n+1} \pi_{B,i} t^i,~ \sum_{i=0}^{n+1} f_i t^i)
\end{align*}
is a deformation of order $n+1$.
It turns out that the first two components of $\theta_{n+1} = (\pi_{A, n+1}, \pi_{B, n+1}, f_{n+1})$ must satisfy (see equation (\ref{alt-def-eqn}))
\begin{itemize}
\item $- \delta_A (\pi_{A, n+1}) = \underbrace{ \sum_{i+j = n+1, i, j \geq 1} \pi_{A, i} \circ \pi_{A, j}}_{\text{Ob}_A},$
\item $- \delta_B (\pi_{B, n+1}) = \underbrace{ \sum_{i+j = n+1, i, j \geq 1} \pi_{B, i} \circ \pi_{B, j}}_{\text{Ob}_B}.$
\end{itemize}
One may also define a map $\theta(f) : \mathbb{K}[U_2] \otimes A^{\otimes 2} \rightarrow B$ by
\begin{align*}
\theta(f) (r; a, b) :=  \sum_{i=1}^n f_i (\pi_{A, n+1-i} (r; a, b)) - \sum_{i+j+k = n+1, 0 \leq i, j, k < n+1}  \pi_{B, i} (r; f_j (a), f_k (b)),
\end{align*}
for $r \in U_2$ and $a, b \in A$. The triple
\begin{align*}
\text{Ob} (\theta_t) = (\text{Ob}_A, \text{Ob}_B, \theta(f)) \in C^3 (f, f)
\end{align*}
is called the obstruction to extend the deformation.

The proof of the following proposition is a tedious calculation and similar to the dialgebra case \cite{yau2}.
\begin{prop}
The obstruction $\mathrm{Ob} (\theta_t)$ is a $3$-cocycle.
\end{prop}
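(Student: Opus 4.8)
The plan is to unwind $\delta_f(\mathrm{Ob}(\theta_t))$ into its three components and check that each one vanishes. Writing $\mathrm{Ob}(\theta_t)=(\mathrm{Ob}_A,\mathrm{Ob}_B,\theta(f))\in C^3(f,f)$, the very definition of $\delta_f$ gives
\[
\delta_f(\mathrm{Ob}_A,\mathrm{Ob}_B,\theta(f)) = \big(\delta_A\mathrm{Ob}_A,\ \delta_B\mathrm{Ob}_B,\ f\circ\mathrm{Ob}_A-\mathrm{Ob}_B\circ f^{\otimes 3}-\delta\,\theta(f)\big),
\]
so the assertion is equivalent to the three identities $\delta_A\mathrm{Ob}_A=0$, $\delta_B\mathrm{Ob}_B=0$, and $\delta\,\theta(f)=f\circ\mathrm{Ob}_A-\mathrm{Ob}_B\circ f^{\otimes 3}$.

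The first two are free. Since $\mathrm{Ob}_A=\sum_{i+j=n+1,\,i,j\ge 1}\pi_{A,i}\circ\pi_{A,j}$ is precisely the obstruction attached to the order-$n$ deformation $\pi_{A,t}$ of the single multiplication $\pi_A$, the earlier proposition showing that such an obstruction is a $3$-cocycle applies verbatim and gives $\delta_A\mathrm{Ob}_A=0$ (recall that $\delta_A$ coincides with $d_{\pi_A}$ up to sign). The identical argument for $B$ gives $\delta_B\mathrm{Ob}_B=0$.

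The real content is the third identity, and I expect it to be the only genuine difficulty. Here I would expand $\delta\,\theta(f)$ by the coboundary formula of the complex $C^\bullet(A,B)$, using that the $A$-action on $B$ is $\theta_1(r;a,b)=\pi_B(r;f(a),b)$ and $\theta_2(r;b,a)=\pi_B(r;b,f(a))$. Because $\theta_t$ is a deformation of order $n$, one may freely invoke, for every $m\le n$, both the order-$m$ deformation equations $-d_{\pi_A}\pi_{A,m}=\sum_{p+q=m,\,p,q\ge 1}\pi_{A,p}\circ\pi_{A,q}$ together with its $B$-analogue (see~(\ref{deform-rel})), and the lower-order morphism relations $\sum_{i+j=m}f_i(\pi_{A,j}(r;a,b))=\sum_{i+j+k=m}\pi_{B,i}(r;f_j(a),f_k(b))$. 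After substituting the definition of $\theta(f)$ and pushing the outer products $\pi_B$ through the inner arguments by means of these morphism relations, the terms reorganize; the cancellations are controlled by exactly the pre-Lie telescoping of~(\ref{pre-lie-iden}) that already forced $\mathrm{Ob}_A$ and $\mathrm{Ob}_B$ to be closed. The surviving terms assemble into $f\circ\mathrm{Ob}_A$ (coming from the $f_i\circ\pi_{A,j}$ part of $\theta(f)$ combined with the $A$-deformation equations) and $-\mathrm{Ob}_B\circ f^{\otimes 3}$ (coming from the $\pi_{B,i}(f_j,f_k)$ part combined with the $B$-deformation equations), which is exactly the desired identity.

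The main obstacle is therefore purely the bookkeeping of this third component: one must track all index triples $i+j+k=n+1$, carry along the structure-function data $R_0,R_i$ attached to the partial compositions~(\ref{loday-par}), and verify that every term not of the two target types cancels in pairs. This is the calculation the statement flags as ``tedious \ldots similar to the dialgebra case''; conceptually it says that the relations already satisfied at orders $\le n$ force the order-$(n+1)$ morphism obstruction to be $\delta$-closed relative to the two algebra obstructions. As a cleaner alternative that sidesteps the indices, one could realise $(C^\bullet(f,f),\delta_f)$ as the Maurer-Cartan complex of a mapping-cone differential graded Lie algebra built from the dgLa's of $A$ and $B$ and the intertwiner $f$; the obstruction is then the top-order coefficient of the Maurer-Cartan curvature of $\theta_t$, which is automatically a cocycle by the graded Jacobi identity. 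Since the excerpt develops only the explicit cochain description, I would give the direct computation as the proof and record the cone interpretation as a remark.
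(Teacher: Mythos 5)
The paper records no proof of this proposition at all: it states only that the verification is ``a tedious calculation and similar to the dialgebra case'' and points to Yau \cite{yau2}. Measured against that, your proposal is organized exactly the way the computation has to go, and in one respect it is sharper than the paper's treatment: your reduction of $\delta_f(\mathrm{Ob}(\theta_t))=0$ to the three identities $\delta_A\mathrm{Ob}_A=0$, $\delta_B\mathrm{Ob}_B=0$, $\delta\,\theta(f)=f\circ\mathrm{Ob}_A-\mathrm{Ob}_B\circ f^{\otimes 3}$ is immediate from the definition of $\delta_f$, and your observation that the first two identities are already proved by the proposition of Section \ref{sec-3} (applied to the order-$n$ deformations $\pi_{A,t}$ and $\pi_{B,t}$, with the sign discrepancy between $\delta_A,\delta_B$ and $d_{\pi_A},d_{\pi_B}$ handled by Remark \ref{gers-rem}) is correct and is precisely the right way to dispose of two thirds of the claim.

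The caveat is that the third identity is the entire substance of the proposition, and there you give a plan rather than a proof: you say which ingredients enter (the order-$\le n$ morphism relations, the deformation equations of $A$ and $B$, the expansion of $\delta\,\theta(f)$ through the structure functions) and assert that the cancellations work out, but you do not carry them out. Since this is exactly the computation the paper itself outsources to \cite{yau2}, you have not fallen short of the paper's own standard --- but be aware that, as written, your argument is a proof scheme whose decisive step remains unverified. Your closing suggestion is worth emphasizing as a genuinely different and cleaner route: realizing $(C^\bullet(f,f),\delta_f)$ as the deformation complex of a mapping-cone (or diagram) differential graded Lie algebra built from the dgLa's of $A$ and $B$ and the intertwiner $f$ --- the viewpoint of \cite{fre-mar-yau} --- makes the cocycle property of the top-order curvature coefficient an automatic consequence of the graded Jacobi identity, eliminating the index bookkeeping altogether; had you developed that remark, it would constitute a complete proof independent of the tedious verification.
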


Hence we obtain the following.

\begin{thm}
If $H^3 (f, f) = 0$ then every finite order deformation of $f$ can be extended to a deformation of next order. In such a case, every $2$-cocycle in $C^2 (f, f)$ is the infinitesimal of some deformation of $f$.
\end{thm}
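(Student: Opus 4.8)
The plan is to reduce the extension problem to the vanishing of a cohomology class, exactly as in Theorem \ref{3-zero-extension}, but now inside the mapping complex $(C^\bullet(f,f),\delta_f)$. The engine is the preceding proposition, which guarantees that the obstruction $\mathrm{Ob}(\theta_t)$ is a $3$-cocycle; the hypothesis $H^3(f,f)=0$ then forces it to be a coboundary, and unwinding the coboundary operator $\delta_f$ recovers precisely the three order-$(n+1)$ deformation equations.

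First I would fix a deformation $\theta_t=(\pi_{A,t},\pi_{B,t},f_t)$ of order $n$ and form its obstruction $\mathrm{Ob}(\theta_t)=(\mathrm{Ob}_A,\mathrm{Ob}_B,\theta(f))\in C^3(f,f)$. By the preceding proposition this is a $3$-cocycle, and since $H^3(f,f)=0$ it equals $-\delta_f(\theta_{n+1})$ for some $\theta_{n+1}=(\pi_{A,n+1},\pi_{B,n+1},f_{n+1})\in C^2(f,f)$. I would then read off the three components of this identity using $\delta_f(\phi,\psi,\zeta)=(\delta_A\phi,\delta_B\psi,f\circ\phi-\psi\circ f^{\otimes 2}-\delta\zeta)$. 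The first two components give $-\delta_A(\pi_{A,n+1})=\mathrm{Ob}_A$ and $-\delta_B(\pi_{B,n+1})=\mathrm{Ob}_B$, which are exactly equation (\ref{alt-def-eqn}) for $A$ and for $B$; hence the truncated series $\pi_{A,t}+\pi_{A,n+1}t^{n+1}$ and $\pi_{B,t}+\pi_{B,n+1}t^{n+1}$ are deformations of $A$ and $B$ of order $n+1$.

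The third component reads $-(f\circ\pi_{A,n+1}-\pi_{B,n+1}\circ f^{\otimes 2}-\delta f_{n+1})=\theta(f)$, and here is where the bookkeeping must be done. Expanding $\delta f_{n+1}$ through the representation $\theta_1(r;a,b)=\pi_B(r;f(a),b)$ and $\theta_2(r;b,a)=\pi_B(r;b,f(a))$ gives $\delta f_{n+1}(r;a,b)=\pi_B(r;f(a),f_{n+1}(b))-f_{n+1}(\pi_A(r;a,b))+\pi_B(r;f_{n+1}(a),f(b))$. Collecting the three genuinely new terms $f\circ\pi_{A,n+1}$, $\pi_{B,n+1}\circ f^{\otimes 2}$ and $f_{n+1}\circ\pi_A$ and comparing with the lower-order data packaged in $\theta(f)$, I would check that this identity is precisely the coefficient of $t^{n+1}$ in the morphism equation $f_t(\pi_{A,t}(r;a,b))=\pi_{B,t}(r;f_t(a),f_t(b))$. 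It then follows that $\overline{\theta}_t=\theta_t+\theta_{n+1}t^{n+1}$ satisfies all the conditions of Definition \ref{defn-def-mor} to order $n+1$, i.e. it is a deformation of order $n+1$.

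For the final assertion, I would start from an arbitrary $2$-cocycle $(\pi_{A,1},\pi_{B,1},f_1)\in C^2(f,f)$. Unwinding $\delta_f(\pi_{A,1},\pi_{B,1},f_1)=0$ shows that $\pi_{A,1}$ and $\pi_{B,1}$ are $2$-cocycles of $A$ and $B$ while $f\circ\pi_{A,1}-\pi_{B,1}\circ f^{\otimes 2}-\delta f_1=0$; these are exactly the order-$1$ conditions, so $\theta_t=(\pi_A+\pi_{A,1}t,\pi_B+\pi_{B,1}t,f+f_1t)$ is a deformation of order $1$ with the prescribed infinitesimal. Applying the extension step inductively, which is legitimate at every stage because $H^3(f,f)=0$, produces a formal deformation whose linear part is the given cocycle. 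The main obstacle I anticipate is the third-component verification above: one must confirm that the precise shape of $\delta_f$ and the precise definition of $\theta(f)$ dovetail, so that the morphism condition at order $n+1$ is recovered exactly; the two algebra conditions and the passage from a cocycle to an order-$1$ deformation are then formal consequences of $\delta_f$ splitting along its three factors.
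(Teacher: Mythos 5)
Your proposal is correct and follows exactly the paper's route: the theorem is an immediate consequence of the preceding proposition (the obstruction $\mathrm{Ob}(\theta_t)$ is a $3$-cocycle), since $H^3(f,f)=0$ makes it a coboundary $-\delta_f(\theta_{n+1})$ whose three components are precisely the order-$(n+1)$ deformation equations for $\pi_{A,t}$, $\pi_{B,t}$ and the morphism condition, and the last assertion follows by starting from a $2$-cocycle as an order-$1$ deformation and iterating. Your third-component bookkeeping (matching $-(f\circ\pi_{A,n+1}-\pi_{B,n+1}\circ f^{\otimes 2}-\delta f_{n+1})=\theta(f)$ with the coefficient of $t^{n+1}$ in $f_t\circ\pi_{A,t}=\pi_{B,t}\circ(f_t\otimes f_t)$) is exactly the verification the paper leaves implicit.
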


When we consider the case of dialgebra morphisms, our theory compactify the descriptions of \cite{yau2}. Similarly, we can apply the results of this section to morphisms between other Loday-type algebras.

\medskip

\noindent {\bf Acknowledgements.} The author would like to thank the anonymous referee for his/her valuable comments that very much improved the paper. He also wishes to thank Andrea Solotar for her several comments. The research was supported by the postdoctoral fellowship of Indian Institute of Technology (IIT) Kanpur. The author thanks the Institute for their support.

\end{document}